\documentclass[12pt]{article}
\usepackage{amsfonts,amsmath,amscd,amssymb,amsthm, graphics,color}

\textheight 21.6cm \textwidth 14cm

\newfont{\eufm}{eufm10 scaled\magstep1}

\newcommand{\cA}{\mathcal{A}}
\newcommand{\cC}{\mathcal{C}}

\newcommand{\cD}{\mathcal{D}}

\newcommand{\cV}{\mathcal{V}}

\newcommand{\cP}{\mathcal{P}}
\newcommand{\cF}{\mathcal{F}}

\newcommand{\cL}{\mathcal{L}}
\newcommand{\cG}{\mathcal{G}}

\newcommand{\cR}{\mathcal{R}}

\newcommand{\bbN}{\mathbb{N}}

\newcommand{\bbC}{\mathbb{C}}

\newcommand{\bbQ}{\mathbb{Q}}

\newcommand{\bbF}{\mathbb{F}}
\newcommand{\bbE}{\mathbb{E}}

\newcommand{\bbK}{\mathbb{K}}

\newcommand{\bbD}{\mathbb{D}}

\newtheorem{thm}{Theorem}[section]
\newtheorem{lem}[thm]{Lemma}
\newtheorem{cor}[thm]{Corollary}
\newtheorem{prop}[thm]{Proposition}
\newtheorem{defi}[thm]{Definition}
\newtheorem{rem}[thm]{Remark}

\newtheorem{ex}[thm]{Example}
\newtheorem{alg}[thm]{Algorithm}

\def\para{\vspace{2mm}}
\def\id{{\rm ID}}
\def\dres{\partial{\rm Res}}
\def\dcres{\partial{\rm CRes}}
\def\ps{{\rm PS}}
\def\zero{{\rm Zero}}
\def\rank{{\rm rank}}
\def\ord{{\rm ord}}
\def\lead{{\rm lead}}
\def\prem{{\rm prem}}
\def\min{{\rm min}}
\def\gcrd{{\rm gcrd}}

\def\im{{\rm IM}}

\begin{document}
\title{Linear Complete Differential Resultants and the Implicitization of Linear DPPEs\thanks{Both authors
supported by the Spanish ``Ministerio de Educaci\'on y Ciencia"
under the Project MTM2005-08690-C02-01}}

\author{Sonia L. Rueda   \\
Dpto de Matem\' atica Aplicada, E.T.S. Arquitectura\\
Universidad Polit\' ecnica de Madrid\\
Avda. Juan de Herrera 4, 28040-Madrid, Spain\\
{sonialuisa.rueda@upm.es}\and J. Rafael Sendra \\ Dpto de
Matem\'aticas. Universidad de Alcal\'a. \\ E-28871 Madrid, Spain. \\
{rafael.sendra@uah.es}}

\maketitle

\begin{abstract}
The linear complete differential resultant of a finite set of
linear ordinary differential polynomials is defined. We study the
computation by linear complete differential resultants of the
implicit equation of a system of $n$ linear differential
polynomial parametric equations in $n-1$ differential parameters.
We give necessary conditions to ensure properness of the system of
differential polynomial parametric equations.
\end{abstract}

\section{Introduction}

The implicitization problem of unirational algebraic varieties has
been widely studied and the results on the computation of the
implicit equation of a system of algebraic rational parametric
equations by algebraic resultants are well known \cite{Cox}, \cite{Cox-2}. This
work was motivated by the paper of X.S. Gao \cite{Gao}, where
computational issues, related with the implicitization problem of
differential rational parametric equations, are treated by
characteristic set methods, making use of the differential algebra
theory developed by Ritt \cite{Ritt} and Kolchin \cite{Kol}. The
paper by X.S. Gao \cite{Gao}, establishes the basis ground for
the generalization to the differential case of the results in
algebraic geometry on implicit and parametric representations of
unirational varieties, conversion algorithms, etc (see, for
instance, \cite{Cox}, \cite{Cox-2}, \cite{sendra-libro}).

We explore the first steps of the generalization to the differential case of the
results in algebraic geometry on implicit representations of
unirational varieties. To be more precise, we are interested in
finding a differential resultant that would solve the
differential implicitization problem. The implicitization problem of
differential rational parametric equations is a differential
elimination problem. There is a wide range of applications of the
differential elimination method to computer algebra and applied
mathematics. For a survey on differential elimination techniques
and their application to biological modeling we refer to
\cite{Bo}.

We defined the implicit equation of a system of $n$ differential
rational parametric equations in $n-1$ differential parameters in
\cite{RS}. In this paper, we study the computation by
differential resultants of the implicit equation of a system $\cP
(X,U)$ of $n$ linear differential polynomial parametric equations
(linear DPPEs) in $n-1$ differential parameters $u_1,\ldots
,u_{n-1}$ (we give a precise statement of the problem in Section
~\ref{sec-Basic notions and notation}),
\begin{equation*}
\cP(X,U) = \left\{\begin{array}{ccc}x_1 &= & P_1 (U)\\ & \vdots  & \\
x_n&= & P_n (U)\end{array}\right..
\end{equation*}

The differential resultant of a set of ordinary differential
polynomials  was introduced and studied by G. Carra'Ferro in
\cite{CF97} for two differential polynomials and in
\cite{CFproc} for a finite number $n$ of differential polynomials
in $n-1$ differential variables. The generalized differential
resultant was also defined by G. Carra'Ferro in \cite{CF05} for
$n+s$ differential polynomials in $n-1$ differential variables,
$s\geq 0$. Previous definitions of the differential resultant for
differential operators are due to L.M. Berkovich-V.G. Tsirulik
\cite{BT} and M. Chardin \cite{Ch}.

Let us consider the linear ordinary differential polynomials
$F_i(X,U)=x_i-P_i (U)$ of order $o_i$, $i=1,\ldots ,n$. As we will
explain in Section ~\ref{subsec-Differential resultants of
F_1,...,F_n} the differential resultant $\dres (F_1,\ldots ,F_n)$
is the determinant of the $L\times L$ matrix $M(L)$, where $L=\sum_{i=1}^{n} ((\sum_{k=1}^n
o_k)-o_i+1)$. In Section
~\ref{sec-Implicitization of linear DPPEs I}, we prove that if
$\dres (F_1,\ldots ,F_n)\neq 0$ then the implicit equation of the
system $\cP (X,U)$ is given by the differential polynomial $\dres
(F_1,\ldots ,F_n)(X)$. Then we analyze some of the reasons for
$\dres (F_1,\ldots ,F_n)= 0$.

The differential resultant $\dres (F_1,\ldots ,F_n)$ is the
Macaulay's algebraic resultant of a differential polynomial set
$\ps$ with $L$ elements. One reason for $\dres (F_1,\ldots ,F_n)= 0$ is the following:
if the polynomials in $\ps$ are not
complete in all its variables then the resultant will be zero. If
for some $j\in\{1,\ldots ,n-1\}$ the differential variable $u_j$
has order less than $o_i$ in $F_i$ for all $i\in \{1,\ldots ,n\}$
then the matrix $M(L)$ has one or more columns of zeros. Let
$\gamma$ be the number of zero columns in $M(L)$ due to this
reason. Let us see an illustrating example.

\begin{ex}\label{ex-intro}
Let us consider the following system of linear DPPEs
\begin{equation*}
\left\{\begin{array}{ccl}
x_1&=& u_1+u_2+u_{21}\\
x_2&=& tu_{11}+u_{22}\\
x_3&=& u_1+u_{21}
\end{array}\right.
\end{equation*}
where $u_{jk}=\partial^k u_j/\partial t^k$, $j=1,2$ and $k\in\bbN$. The differential resultant of
$F_1(X,U)=x_1-u_1-u_2-u_{21}, F_2(X,U)=x_2-tu_{11}-u_{22}, F_3(X,U)=x_3-u_1-u_{21}$ is zero because
the order of $u_1$ in every polynomial is less than its order,
$\gamma=1$. The implicit equation of this systems is
$(t-1)x_{12}-tx_{31}-(t-1)x_{32}+x_2$, $x_{ik}=\partial^k x_i/\partial t^k$, $i=1,2,3$, $k\in\bbN$.
\end{ex}

We define in Section ~\ref{sec-Linear gamma-Differential
Resultants} the linear complete differential resultant of a finite
set of linear ordinary differential polynomials, generalizing
Carra'Ferro's differential resultant in the linear case. We prove
the next theorem in Section ~\ref{sec-Linear gamma-Differential
Resultants}.

\vspace{0.5cm}{\bf \noindent Theorem.}
\begin{em}
Given a system $\cP (X,U)$ of differential polynomial parametric
equations. If the linear complete differential resultant
$\dcres (F_1,\ldots ,F_n)\neq 0$ then $\dcres
(F_1,\ldots ,F_n)(X)=0$ is the implicit equation of $\cP (X,U)$.
\end{em}
\vspace{0.5cm}

In Section ~\ref{sec-Some results on properness}, we study whether
the system $\cP (X,U)$ of linear DPPEs is proper, problem closely
related to the existence of inversion maps. If the system $\cP
(X,U)$ is not proper we prove that $\dcres (F_1,\ldots
,F_n)=0$. The homogeneous part of the linear differential
polynomials $F_1,\ldots ,F_n$ can be written in terms of
differential operators. We obtain necessary conditions on these
differential operators so that $\dcres (F_1,\ldots
,F_n)\neq 0$.

Computations throughout this paper were carried out with our Maple implementation
of functions to compute the differential resultant by Carra'Ferro in \cite{CFproc}
and the linear complete differential resultant defined in Section ~\ref{sec-Linear gamma-Differential
Resultants} of this paper. Our implementation is available at \cite{R}.

The paper is organized as follows. In Section ~\ref{sec-Basic
notions and notation} we introduce the main notions and notation.
Next we review the definition of the differential resultant
defined by Carra'Ferro in Section ~\ref{sec-Differential
Resultants} and we define the differential homogeneous resultant.
In Section ~\ref{sec-Implicitization of linear DPPEs I} we explain
the computation of the implicit equation by Carra'Ferro's
differential resultant. We define the linear complete differential
resultant in Section ~\ref{sec-Linear gamma-Differential
Resultants}. In Section ~\ref{sec-Implicitization of linear DPPEs}
we give our main results on the implicitization of linear DPPEs by
linear complete differential resultants. Our results on properness
appear in Section ~\ref{sec-Some results on properness}. We finish
the paper discussing the special cases $n=2$ and $n=3$ in Section
~\ref{sec-Treatment of special cases}.

\section{Basic Notions and Notation}\label{sec-Basic notions and
notation}

In this section, we introduce the basic notions related to the
problem we deal with, as well as the notation and terminology used
throughout the paper. For further concepts and results on
differential algebra we refer to  \cite{Kol} and \cite{Ritt}.

\para

Let $\bbK$ be an ordinary differential field with derivation
$\partial$, ( e.g. $\bbQ(t)$, $\partial=\frac{\partial}{\partial
t}$). Let $X=\{x_1,\ldots ,x_n\}$ and $U=\{u_1,\ldots ,u_{n-1}\}$
be  sets of differential indeterminates over $\bbK$. Let $\bbE$ be
a universal extension field of $\bbK$ containing the set of
indeterminates $U$. Let $\bbN_0=\{0,1,2,\ldots ,n,\ldots\}$. For
$k\in\bbN_0$ we denote by $x_{ik}$ the $k$-th derivative of $x_i$.
Given a set $Y$ of differential indeterminates over $\bbK$ we
denote by $\{Y\}$ the set of derivatives of the elements of $Y$,
$\{Y\}=\{\partial^k y\mid y\in Y,\; k\in \bbN_0\}$, and  by $\bbK
\{X\}$ the ring of differential polynomials in the differential
indeterminates $x_1,\ldots ,x_n$, that is
\begin{displaymath}
\bbK\{X\}=\bbK[x_{ik}\mid i=1,\ldots ,n,\; k\in \bbN_0 ].
\end{displaymath}
Analogously for $\bbK \{U\}$.

\para

A {\sf system of differential rational parametric equations} (system
of DRPEs) is a system of the form
\begin{equation}\label{DRPEs}
{\cal Q}(X,U)=\left\{\begin{array}{ccc}x_1 &=& \frac{P_1 (U)}{Q_1(U)}\\ &\vdots & \\
x_n &=& \frac{P_n (U)}{Q_n(U)}\end{array}\right.
\end{equation}
where $P_1,\ldots ,P_n, Q_1,\ldots ,Q_n\in \bbK \{U\}$,  $Q_i$ are
non-zero, and  not all $P_i, Q_i\in \bbK$, $i=1,\ldots ,n$. We call
the indeterminates $U$ a set of parameters of ${\cal Q}(X,U)$; they
are not necessarily independent.  When all differential polynomials
$P_i,Q_i$ are of degree at most 1, we say that ~\eqref{DRPEs} is a
{\sf linear system.} Moreover, if all $Q_i\in \bbK$, we say that
~\eqref{DRPEs} is a {\sf system of differential polynomial
parametric equations} (system of DPPEs).
 Associated with the
system ~\eqref{DRPEs} we consider the differential ideal (see
\cite{Gao}, Section 3)
\begin{equation*}
\id =\{f\in \bbK \{X\}\mid f(P_1 (U)/Q_1(U),\ldots ,P_n
(U)/Q_n(U))=0\},
\end{equation*}
and we call it the {\sf implicit ideal}
 of the system ~\eqref{DRPEs}.
By \cite{Gao}, Lemma 3.1, the implicit ideal $\id$ is a
differential prime ideal. Moreover, given a characteristic set
$\cC$ of $\id$ then $n-\mid \cC \mid$ is the (differential)
dimension of $\id$, by abuse of notation, we will also speak about
the dimension of a DRPEs system meaning the dimension of its
implicit ideal. The parameters $U$ are independent if  $\dim({\cal
Q}(X,U))=|U|$  (see \cite{Gao}, Section 5).

\para

The differential variety defined by
\[ \zero(\id)=\{\eta\in \Bbb E^n\mid \forall f\in\id, f(\eta)=0\} \]
is called the {\sf implicit variety} of ${\cal Q}(X,U)$. If the
parameters of ${\cal Q}(X,U)$ are not independent and $\bbK=\bbQ
(t)$, there exists a set of new DRPEs with the same implicit
variety as ${\cal Q}(X,U)$ but with independent parameters, (see
\cite{Gao}, Theorem 5.1).

\para

If $\dim(\id)=n-1$, then $\cC=\{A(X)\}$ for some irreducible
differential polynomial $A\in\bbK\{X\}$. The polynomial $A$  is
called a {\sf characteristic polynomial} of $\id$. Furthermore,  if
$B$ is another characteristic polynomial of $\id$ then $A=bB$ with
$b\in \bbK$.

\para

We introduced the notion of implicit equation in \cite{RS} and we
include it here for completion.

\begin{defi}
 The {\sf implicit equation} of a $(n-1)$-dimensional
 system of DRPEs, in $n$ differential indeterminates $X=\{x_1,\ldots,x_n\}$,
 is defined as the equation
 $A(X)=0$, where $A$ is any characteristic polynomial of the
 implicit ideal $\id$ of the system.
\end{defi}

In the sequel, we consider the linear system of DPPEs
\begin{equation}\label{DPPEs}
\cP(X,U) = \left\{\begin{array}{ccc}x_1 &= & P_1 (U)\\ & \vdots  & \\
x_n&= & P_n (U)\end{array}\right.
\end{equation}
So,  $P_1,\ldots ,P_n\in \bbK\{U\}$ with degree at most $1$, and
not all $P_i\in \bbK$, $i=1,\ldots ,n$.

Let $\bbK [\partial]$ be the ring  of differential operators with
coefficients in $\bbK$. For $i=1,\ldots n$ and $j=1,\ldots n-1$,
there exist differential operators $\cL_{ij}\in \bbK [\partial]$
and constants $a_i\in\bbK$ such that
\begin{equation*}
P_i(U)=a_i-\sum_{j=1}^{n-1} \cL_{ij} (u_j).
\end{equation*}
We define the differential polynomials
\[
T_i(X)=x_i-a_i, \,\,\, H_i(U)= \sum_{j=1}^{n-1} \cL_{ij} (u_j),
\,\,\mbox{and}\,\,\, F_i(X,U)= T_i(X)+H_{i}(U).
\]

Given $P\in \bbK \{X\cup U\}$ and $y\in X\cup U$, we denote by $\ord(P,y)$ the {\sf
order} of $P$ in the variable $y$. If $P$ does not have
a term in $y$ then we define $\ord(P,y)=-1$.

\begin{rem}\label{rem-ordFi}
To ensure that the number of parameters is $n-1$, we assume that
for each $j\in\{1,\ldots ,n-1\}$ there exists $i\in\{1,\ldots
,n\}$ such that the differential operator $\cL_{ij}\neq 0$. That
is, for each $j\in\{1,\ldots ,n-1\}$ there exists $i\in\{1,\ldots
,n\}$ such that $\ord (F_i, u_j)\geq 0$.
\end{rem}

\para

In this situation, the problem we deal with in this paper is:
given a system $\cP (X,U)$ of linear DPPEs, to compute an implicit
equation using differential resultants. In \cite{Gao},
algorithmic methods for solving this problem in a more general
case are presented in the language of characteristic sets. Our
candidate to be the implicit equation of $\cP (X,U)$ is the
differential resultant of $F_1,\ldots F_n$ that we describe in the
next section.

\section{Differential Resultants}\label{sec-Differential Resultants}

Let $\bbD$ be a differential integral domain, and let
$f_i\in\bbD\{U\}$ be an ordinary differential polynomial of order
$o_i$, $i=1,\ldots ,n$. A {\sf differential resultant} $\dres
(f_1,\ldots,f_n)$ of $n$ ordinary differential polynomials
$f_1,\ldots ,f_n$ in $n-1$ differential variables $u_1,\ldots
,u_{n-1}$ was introduced by Carra'Ferro in \cite{CFproc}. Such a
notion coincides with the Macaulay's algebraic resultant \cite{Mac} of the
differential polynomial set
\begin{displaymath}
\ps(f_1,\ldots ,f_n):=\{\partial^{N-o_i}f_i,\ldots
,\partial f_i, f_i \mid i=1,\ldots
,n,\,\,\,\mbox{where}\,\,\,N=\sum_{i=1}^n o_i\},
\end{displaymath}

\para

Now, let $h_i\in\bbD\{U\}$ be a ordinary differential
homogeneous polynomial of order $o_i$, $i=1,\ldots ,n$. We define the {\sf
differential homogenous resultant} $\dres^h (h_1,\ldots ,h_n)$ of
$n$ ordinary homogeneous differential polynomials $h_1,\ldots ,h_n$
in $n-1$ variables as the Macaulay's algebraic resultant of the
differential polynomial set
\begin{displaymath}
\ps^h(h_1,\ldots
,h_n):=\{\partial^{N-o_i-1}h_i,\ldots ,\partial h_i, h_i \mid
i=1,\ldots ,n,\,\,\,\mbox{where}\,\,\,N=\sum_{i=1}^n o_i\}.
\end{displaymath}

A differential homogeneous resultant was defined also by
Carra'Ferro in \cite{CF97} for $n=2$. In addition, when the
homogeneous polynomials have degree one and $n=2$ the differential
homogeneous resultant coincides with the differential resultant of
two differential operators defined by Berkovitch-Tsirulik in
\cite{BT} and studied also by Chardin in \cite{Ch}.

\para

We implemented in Maple the differential resultant matrices defined by
Carra'Ferro in \cite{CFproc}, our implementation is available at \cite{R}.
This is the tool used to perform our experiments and in particular
the computations in the examples of this paper.

Differential resultants are Macaulay's algebraic resultants
therefore with some previous computations, the implementation of
the Macaulay's algebraic resultant ( available at \cite{Min})
could be also used to compute differential resultants.

\begin{ex}
Let $\bbK=\bbC (t)$ and $\partial=\frac{\partial}{\partial t}$.
The differential resultant $\dres (f,g)$ of the differential
polynomials $f(u_1)=t - 4{u_{11}}^{2} - 4u_1^{2} - t{u_{11}}u_1 - 5{u_{11}}
- 4u_1$ and $g(u_1)=t - u_1 - 3{u_{11}}$ in $\bbK\{u_1\}$ ( $u_{1i}=\partial^i
u_1/\partial t^i$) is the Macaulay's algebraic resultant of the set
$\ps(f,g)=\{\partial f, f, \partial g, g\}$. By \cite{CFproc},
Definition 10, we can compute $\dres (f,g)$ as the quotient of two
determinants. The numerator is the determinant of a matrix of
order $20$, the number of monomials below, that we call $M(20)$ and the denominator is the
determinant of a submatrix of $M(20)$ that we call $A$.

The order of the differential polynomials in $\ps (f,g)$ is less
than or equal to two. The rows of the matrix $M(20)$ are the
coefficients of the following differential polynomials:

$\begin{array}{lcl} \mbox{ rows }1\ldots 4 &
\rightarrow & \{u_{11}\partial f, u_1\partial f, u_{12}\partial f,
\partial f\} \\
\mbox{ rows }5\ldots 8 & \rightarrow & \{{u_{11}}f,
u_1f, {u_{12}}f, f\}\\
\mbox{ rows }9\ldots 16 & \rightarrow &
\{{u_{11}}u_1\partial g, {u_{11}}{u_{12}}
\partial g, u_1{u_{12}}\partial g, {u_{12}}^{2}\partial g,\\
 & &\,{u_{11}}\partial g, u_1\partial g, {u_{12}}\partial g, \partial g\}\\
\mbox{ rows }17\ldots20 & \rightarrow & \{{u_{11}}u_1
g, {u_{11}}g, u_1g, g\}.
\end{array}
$

\para
The coefficients are written in decreasing order using first the
degree and then the lexicographic order with ${u_{12}}< u_1
<{u_{11}}$, that is, the columns of the matrix $M(20)$ are indexed
by the monomials in the list
\begin{align*}
&{u_{11}}^{3}, {u_{11}}^{2}u_1, {u_{11}}^{2}{u_{12}}, {u_{11}} u_1^{2},
{u_{11}}u_1{u_{12}}, {u_{11}}{u_{12}}^{2}, u_1^{3}, u_1^{2}{u_{12}},
u_1{u_{12}}^{2}, {u_{12}}^{3}, {u_{11}}^{2}
, {u_{11}}u_1,\\
&{u_{11}}{u_{12}}, u_1^{2}, u_1{u_{12}}, {u_{12}}^{2}, {u_{11}}, u_1, {u_{12}}, 1.
\end{align*}

The matrix $A$ is the submatrix of $M(20)$ obtained by removing
rows $1$, $2$, $5$, $6$, $10$, $11$, $12$ and columns $1$, $2$, $4$, $6$, $7$, $9$, $10$ of $M(20)$.
Finally, the differential resultant is the quotient $\dres
(f,g)=\det (M(20))/\det (A)$ where
\begin{align*}
&\det (M(20))=189050112\,t + 648075168\,t^{2} + 274613328\,t^{ 3}+ 1039857264\,t^{4}\\
&- 240663312\,t^{5} - 108661824\,t^{6} +1932336\,t^{7} + 3114288\,t^{8} - 174960\,t^{9},\\
&\det (A)= - 13608\,t^{2} - 64476\,t^{3} - 48600\,t^{4}+ 8748\,t^{5} - 3888\,t^{6}.
 \end{align*}
\end{ex}

\subsection{Differential resultant of $F_1,\ldots ,F_n$}\label{subsec-Differential resultants of F_1,...,F_n}

We consider now the polynomials $F_i$ and $H_i$, introduced in the
Section ~\ref{sec-Basic notions and notation}, and we set $\bbD=\bbK\{X\}$. In this section, we
give details on the computation of $\dres (F_1,\ldots ,F_n)$ and
$\dres^h (H_1,\ldots ,H_n)$, since they will be important tools
in this paper. We think of $F_1,\ldots ,F_n$ as polynomials in the
$n-1$ variables $u_1,\ldots ,u_{n-1}$ and coefficients in the
differential domain $\bbD$; recall that $F_1,\ldots ,F_n$ are  of
orders $o_i$ and degree one. We review below the computation of
$\dres (F_1,\ldots ,F_n)$ by means of determinants as in
\cite{CFproc}.

\para

We define rankings on the sets of variables $X$ and $U$ so that the
matrix we use to compute the differential resultant $\dres
(F_1,\ldots ,F_n)$ equals the one used in \cite{CFproc}.
\begin{itemize}
\item The order $x_n<\cdots <x_1$ induces a ranking on $X$ (i.e. an
order on $\{X\}$) as follows (see \cite{Kol}, page 75):
$x<\partial x$ and $x<x^\star\Rightarrow \partial^k x < \partial^{k^\star}
x^\star$, for all $x, x^\star\in X$, $k,k^\star\in\bbN_0$.

\item The order $u_1<\cdots <u_{n-1}$ induces an orderly ranking on $U$ as follows
(see \cite{Kol}, page 75): $u<\partial u$, $u<u^\star\Rightarrow
\partial u <\partial u^\star$ and $k<k^\star\Rightarrow\partial^k u <
\partial^{k^\star} u^\star$, for all $u, u^\star\in U$, $k,k^\star\in\bbN_0$. We
set $1<u_1$.
\end{itemize}

We call $\cR$ the ranking on $X\cup U$ that eliminates $X$ with
respect to $U$, that is $\partial^k x> \partial^{k^\star} u$, for all
$x\in X$, $u\in U$ and $k,k^\star\in\bbN_0$. Now, the set
$\ps=\ps(F_1,\ldots ,F_n)$ is ordered by $\cR$. Note that, because
of the particular structure of $F_i$, one has that:
\[ F_n<\partial   F_n <\cdots < \partial^{N-o_n} F_n< \cdots <F_2< \partial   F_2<\cdots< F_1<\cdots <\partial^{N-o_1} F_1.
\]
That is, $\ps$ is a chain (see \cite{Ritt}, page 3) of
differential polynomials $\{G_1,\ldots ,G_L\}$ with
$L=\sum_{i=1}^{n} N-o_i+1=(n-1)N+n$; recall that $N=\sum_{i=1}^n
o_i$.

\para

Then, let $M(L)$ be the $L\times L$ matrix whose $k$-th row
contains the coefficients of the $(L-k+1)$-th polynomial in $\ps$,
as a polynomial in $\bbD\{U\}$, and where the coefficients are
written in decreasing order with respect to the orderly ranking on
$U$. Hence, $M(L)$ is a matrix over $\bbK\{X\}$ that we call the
{\sf differential resultant matrix} of $F_1,\ldots ,F_n$. In this
situation:
\begin{displaymath}
\dres (F_1,\ldots ,F_n)=\det(M(L)).
\end{displaymath}

Analogously, we can use determinants to compute $\dres^h
(H_1,\ldots ,H_n)$; recall that the homogeneous differential
polynomials $H_i\in \bbK\{U\}$ are of orders $o_i$ and degree one.
Let $L^h=L-n$ and consider $\ps^h=\ps^h(H_1,\ldots ,H_n)$ as the
polynomial set obtained from $\ps$ by subtracting from the chosen
polynomials its monomial in $\bbD$ (i.e. $x_i-a_i$), therefore
maintaining in $\ps^h$ the ordering established in $\ps$. Let
$M(L^h)$ be the $L^h\times L^h$ matrix whose $(L^h-k+1)$-th row
contains the coefficients of the $k$-th polynomial in $\ps^h$, as
a polynomial in $\bbD\{U\}$, and where the coefficients are
written in decreasing order with respect to the orderly ranking on
$U$. Hence, $M(L^h)$ is a matrix over $\bbK$ that we call the {\sf
differential homogeneous resultant matrix} of $H_1,\ldots ,H_n$.
In this situation:
\begin{displaymath}
 \dres^h (H_1,\ldots ,H_n)=\det(M(L^h)).
\end{displaymath}

\begin{ex}
Let $\bbK=\bbQ(t)$, $\partial =\frac{\partial}{\partial t}$ and
consider the set of differential polynomials in $\bbK\{x_1,x_2,x_3\}\{u_1,u_2\}$,
\begin{align*}
F_1 (X,U)&= x_1-7t-u_1 - 3u_{11} + 3u_2 - tu_{21}\\
F_2 (X,U)&= x_2-u_1+u_{12} - 5u_{22}\\
F_3 (X,U)&= x_3-u_1+u_{12}- t^{2}u_{21}.
\end{align*}
Then the set $\ps(F_1,F_2,F_3)$ contains $L=13$ differential
polynomials and $\dres (F_1,F_2,F_3)=\det(M(L))$ where $M(L)$ is
the following $L\times L$ coefficient matrix of $\ps(F_1,F_2,F_3)$.

\[
 \left[
{\begin{array}{crcrcrcrcrrrc}
 - t & -3 & -1 & -1 & 0 & 0 & 0 & 0 & 0 & 0 & 0 & 0 & x_{14} \\
0 & 0 &  - t & -3 & 0 & -1 & 0 & 0 & 0 & 0 & 0 & 0 & x_{13} \\
0 & 0 & 0 & 0 &  - t & -3 & 1 & -1 & 0 & 0 & 0 & 0 & x_{12} \\
0 & 0 & 0 & 0 & 0 & 0 &  - t & -3 & 2 & -1 & 0 & 0 & x_{11}-7 \\
0 & 0 & 0 & 0 & 0 & 0 & 0 & 0 &  - t & -3 & 3 & -1 & x_1-7t
 \\
-5 & 1 & 0 & 0 & 0 & -1 & 0 & 0 & 0 & 0 & 0 & 0 & x_{23} \\
0 & 0 & -5 & 1 & 0 & 0 & 0 & -1 & 0 & 0 & 0 & 0 & x_{22} \\
0 & 0 & 0 & 0 & -5 & 1 & 0 & 0 & 0 & -1 & 0 & 0 & x_{21} \\
0 & 0 & 0 & 0 & 0 & 0 & -5 & 1 & 0 & 0 & 0 & -1 & x_2 \\
0 & 1 &  - t^{2} & 0 &  - 6\,t & -1 & -6 & 0 & 0 & 0 & 0 & 0 & x_{33} \\
0 & 0 & 0 & 1 &  - t^{2} & 0 &  - 4\,t & -1 & -2 & 0 & 0 & 0 & x_{32} \\
0 & 0 & 0 & 0 & 0 & 1 &  - t^{2} & 0 &  - 2\,t & -1 & 0 & 0 & x_{31} \\
0 & 0 & 0 & 0 & 0 & 0 & 0 & 1 &  - t^{2} & 0 & 0 & -1 & x_3
\end{array}}
 \right]
\]
Let $M(L^h)$ be the square submatrix of size $L^h=10$ of $M(L)$
obtained by removing columns $1$, $2$, $13$ and rows $1$, $6$, $10$. Then
$\dres^h (H_1,H_2,H_3)=\det(M(L^h))$.
\end{ex}

We show next some properties of $\dres (F_1,\ldots ,F_n)$ and
$\dres^h (H_1,\ldots ,H_n)$ that will be used later in the paper.

\begin{prop}\label{dResh1}
If  $\{H_1=0,\ldots ,H_n=0\}$ has a nonzero solution
then $\dres^h (H_1,\ldots ,H_n)=0$.
\end{prop}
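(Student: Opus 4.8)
The statement to prove is Proposition~\ref{dResh1}: if the homogeneous linear differential system $\{H_1=0,\ldots,H_n=0\}$ admits a nonzero solution (in the universal extension field $\bbE$), then $\dres^h(H_1,\ldots,H_n)=\det(M(L^h))=0$. The natural approach is to exhibit a nonzero vector in the kernel of the transpose of $M(L^h)$, or equivalently to show that the rows of $M(L^h)$ are linearly dependent over $\bbE$ by producing a nonzero $\bbE$-linear combination of the polynomials in $\ps^h$ that vanishes identically. First I would recall that $M(L^h)$ is the coefficient matrix of the polynomial set $\ps^h(H_1,\ldots,H_n)=\{\partial^{N-o_i-1}H_i,\ldots,\partial H_i, H_i\mid i=1,\ldots,n\}$, viewing each entry as the coefficient of one of the $L^h$ monomials of degree one in the derivatives $u_{jk}$ that appear. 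A column-vector relation $M(L^h)^{t}\mathbf{c}=0$ corresponds exactly to an assignment of field values to those $u_{jk}$ that annihilates every polynomial in $\ps^h$ simultaneously.

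**Key steps.** Let $\eta=(\eta_1,\ldots,\eta_{n-1})\in\bbE^{n-1}$ be a nonzero solution of $H_1=\cdots=H_n=0$, so $H_i(\eta)=0$ in $\bbE$ for each $i$. Since the derivation $\partial$ extends to $\bbE$ and the $H_i$ are differential polynomials, we also have $(\partial^m H_i)(\eta)=\partial^m(H_i(\eta))=0$ for every $m\ge 0$; in particular this holds for all the prolongations $\partial^{N-o_i-1}H_i,\ldots,\partial H_i,H_i$ occurring in $\ps^h$. Now substitute into each monomial $u_{jk}$ appearing as a column index of $M(L^h)$ the value $\partial^{k}\eta_j\in\bbE$. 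Each polynomial in $\ps^h$ is homogeneous linear of degree one in these monomials, so its value under this substitution is precisely the dot product of the corresponding row of $M(L^h)$ with the column vector $v$ whose $(j,k)$-entry is $\partial^k\eta_j$; by the previous remark this dot product is $0$. Hence $M(L^h)\,v=0$. It remains to check that $v\neq 0$: since $\eta$ is a nonzero solution, some $\eta_j\neq 0$, and the variable $u_j$ itself (i.e. $u_{j0}$) is among the column indices of $M(L^h)$ — this is guaranteed because $M(L^h)$ is built from the full homogeneous set $\ps^h$ whose monomials include every $u_j$ with $0\le \ord(H_i,u_j)$ for some $i$, and by Remark~\ref{rem-ordFi} every $u_j$ does appear in some $H_i$. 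Therefore $v$ is a nonzero element of the kernel of $M(L^h)$, so $\det(M(L^h))=0$, i.e. $\dres^h(H_1,\ldots,H_n)=0$.

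**Main obstacle.** The routine linear-algebra core is immediate; the only genuine point requiring care is the bookkeeping that the substitution $u_{jk}\mapsto \partial^k\eta_j$ lands inside the set of columns actually indexed in $M(L^h)$, and that the resulting vector $v$ is honestly nonzero in $\bbE^{L^h}$ — in other words, that no ``missing column'' phenomenon (the $\gamma$ zero-column issue discussed in the introduction) causes the component $\partial^0\eta_j$ with $\eta_j\neq 0$ to be discarded. One checks that whenever $u_j$ has order $\ord(H_i,u_j)=-1$ or a low order in some $H_i$, the corresponding derivative $u_{j0}$ still appears in $\ps^h$ through another $H_{i'}$ (Remark~\ref{rem-ordFi}), so the column is present. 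With that verified, the proof is complete; the argument is the differential-homogeneous analogue of the classical fact that a projective resultant vanishes when the homogeneous system has a common nontrivial root.
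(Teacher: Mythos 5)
Your proof is correct and follows the same route as the paper: a nonzero solution $\eta$ of $\{H_1=0,\ldots,H_n=0\}$ also annihilates all the prolongations in $\ps^h$, and substituting $u_{jk}\mapsto\partial^k\eta_j$ yields a nonzero vector in the kernel of $M(L^h)$, i.e.\ a linear dependence among its columns. The paper's proof is just a terser statement of exactly this argument; your extra care about the nonvanishing of the substitution vector and the presence of the column indexed by $u_{j}$ is a welcome (and correct) elaboration.
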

\begin{proof}
Let $\bbF$ be a differential field extension of $\Bbb K$. Then
every nonzero solution of $\{H_1=0,\ldots ,H_n=0\}$ in
$\bbF^{n-1}$ is a nonzero solution of the system
$\{\partial^{N-o_i-1}H_i=0,\ldots ,\partial H_i=0, H_i=0 \mid
i=1,\ldots ,n\}$. If such a solution exists then the columns of
$M(L^h)$ are linearly dependent on $\bbF$.
\end{proof}

\begin{rem}\label{rem-HidResh}
For $n=2$, $\dres^h (H_1,H_2)=0$ if and only if $\{H_1=0,H_2=0\}$
has a nonzero solution in $\bbF$,  a differential field extension
of $\Bbb K$ (see \cite{BT}, Theorem 3.1). Unfortunately, for
$n>2$ the condition $\dres^h (H_1,\ldots ,H_n)=0$ is not
sufficient for the existence of nonzero solutions of the system
$\{H_1=0,\ldots ,H_n=0\}$. Let $n=3$ and
\[ H_1(U)=u_{11}+u_{21}, H_2(U)=u_1+u_2, H_3(U)=u_1+u_{11}+u_{21}. \]
The first two columns of $M(L^h)$ are equal and therefore $\dres^h (H_1,H_2,H_3)=0$.
The system $\{H_1=0,H_2=0,H_3=0\}$ has only the zero solution.
\end{rem}

We introduce some matrices that will be used in the proof of the
next result and also in later results in the paper. Let $S$ be the
$n\times (n-1)$ matrix whose entry $(i,j)$ is the
coefficient of $u_{jo_i}$ in $F_i$, $i\in\{1,\ldots ,n\}$,
$j\in\{1,\ldots ,n-1\}$. Let $S_i$ be the matrix obtained by
removing the $i$-th row of $S$.

\begin{rem}\label{S_dresh}
Note that the nonzero rows of the columns of $M(L)$ ( resp.
$M(L^h)$) corresponding to the coefficients of $u_{j N}$ (resp.
$u_{j N-1}$), $j=1,\ldots ,n-1$ are the rows of $S$.
\end{rem}

\begin{prop}\label{dRes0}
Let $M_{L-1}$ be the $L\times (L-1)$ principal
submatrix of $M(L)$. The following statements are equivalent:
\begin{enumerate}
\item $\dres (F_1,\ldots ,F_n)\neq 0$.

\item $\dres^h (H_1,\ldots ,H_n)\neq 0$.

\item \rank$(M_{L-1})=L-1$.

\end{enumerate}
\end{prop}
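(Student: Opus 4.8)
The plan is to prove the cycle of implications $(1)\Leftrightarrow(3)$ and $(2)\Leftrightarrow(3)$, since both differential resultants are honest determinants of the square matrices $M(L)$ and $M(L^h)$, and both matrices are closely tied to the $L\times(L-1)$ matrix $M_{L-1}$. First I would examine the column structure of $M(L)$. By construction, the last $n$ columns of $M(L)$ (indexed by the degree-$0$ monomial $1$, one per block) carry the coefficients $x_i-a_i$, i.e. the monomials in $\bbD=\bbK\{X\}$; every other column lies over a genuine monomial in $\{U\}$ and so has entries in $\bbK$. Removing the very last column of $M(L)$ — the one recording the top monomial-in-$\bbD$ entries $\partial^{N-o_i}(x_i-a_i)$ — produces $M_{L-1}$, and (Laplace expansion along that last column) $\dres(F_1,\dots,F_n)=\det M(L)$ is, up to sign, a $\bbK$-linear combination of the maximal minors of $M_{L-1}$ with coefficients the entries $\partial^{N-o_i}(x_i-a_i)$, which are $\bbK$-linearly independent over $\bbK$. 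Hence $\det M(L)\neq 0$ forces $\rank M_{L-1}=L-1$, giving $(1)\Rightarrow(3)$.

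For $(3)\Rightarrow(2)$ I would identify $M(L^h)$ inside $M(L)$. As described just before the statement, $M(L^h)$ is obtained from $M(L)$ by deleting the $n$ columns of $\bbD$-monomials together with the $n$ rows indexed by the top derivatives $\partial^{N-o_i}F_i$ of each block (this is exactly the passage from $\ps$ to $\ps^h$, which drops one derivative per polynomial). I would argue that if $\rank M_{L-1}=L-1$ then, after deleting the top row of each block, the resulting $(L-n)\times(L-1)$ submatrix still has full row rank $L-n$; restricting further to the $L^h=L-n$ columns over $\{U\}$-monomials of the appropriate degree yields precisely $M(L^h)$, and one checks its rows remain independent, so $\det M(L^h)\neq 0$. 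The mild subtlety here — and the place I expect the real work — is bookkeeping: one must verify that the deleted rows/columns are consistently the "top" ones in every block and that the monomial supports match, using Remark~\ref{S_dresh} to control the behaviour of the leading columns (those over $u_{jN}$, resp.\ $u_{jN-1}$), whose nonzero entries form the matrix $S$.

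The implications $(2)\Rightarrow(3)$ and $(3)\Rightarrow(1)$ then follow by essentially reversing these two arguments: $\det M(L^h)\neq 0$ exhibits $L-n$ independent rows among those of $M_{L-1}$ sitting over $\{U\}$-columns, and one completes to a nonzero maximal minor of $M_{L-1}$ by appending the $n$ top-derivative rows and the $n-1$ extra leading $\{U\}$-columns still available in $M_{L-1}$ — the count works out because $M_{L-1}$ has exactly $L-1=(L^h)+(n-1)$ columns. Finally, $\rank M_{L-1}=L-1$ means some maximal minor is nonzero; expanding $\det M(L)$ along its last column, which contains the $\bbK$-linearly independent symbols $\partial^{N-o_i}(x_i-a_i)$ as coefficients of these minors, the resulting differential polynomial in $\bbK\{X\}$ cannot vanish identically, so $\dres(F_1,\dots,F_n)\neq 0$. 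Assembling the four one-way implications closes the equivalence.
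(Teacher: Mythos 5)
Your overall route coincides with the paper's: expand $\det M(L)$ along the column of the monomial $1$ to get $(1)\Leftrightarrow(3)$, and use the block structure relating $M_{L-1}$, $M(L^h)$ and the leading-coefficient matrix $S$ of Remark~\ref{S_dresh} to get $(2)\Leftrightarrow(3)$. Before the main point, one structural slip: $M(L)$ has a \emph{single} column over the monomial $1$ (the last one), not ``$n$ last columns, one per block''; its $L$ entries are all the $\partial^k(x_i-a_i)$, $k=0,\dots,N-o_i$, $i=1,\dots,n$, and it is the $\bbK$-linear independence of all $L$ of these (not just the $n$ top ones) that makes the expansion argument work. That part is repairable and is exactly the paper's argument for $(1)\Leftrightarrow(3)$.

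The genuine gap is in your $(3)\Rightarrow(2)$: you claim that if $\rank M_{L-1}=L-1$ then deleting the $n$ rows of the $\partial^{N-o_i}F_i$ leaves a matrix of full row rank $L-n$, and that restricting to the columns of $M(L^h)$ keeps the rows independent. Neither step is valid in general: deleting rows from a matrix of full column rank can drop the rank of what remains (delete the first row of $\left(\begin{smallmatrix}1&0\\0&1\\0&0\end{smallmatrix}\right)$), and discarding columns afterwards can only lose more. What must be used is Remark~\ref{S_dresh}: the $n-1$ columns of $M_{L-1}$ not belonging to $M(L^h)$ (those of the $u_{jN}$) are supported exactly on the $n$ deleted rows, so $M_{L-1}$ is block triangular with blocks $S$ and $M(L^h)$ --- and even then one must still exclude the scenario $\rank M(L^h)=L-n-1$ compensated by the top block, which needs a further argument (the paper's own one-line justification of this direction is very terse as well, so this is precisely where the real work lies, not mere bookkeeping). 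Symmetrically, in $(2)\Rightarrow(3)$ your ``completion'' of the nonsingular $M(L^h)$ to a nonzero maximal minor of $M_{L-1}$ requires knowing that $\det(S_k)\neq 0$ for some $k$; this is the content of the paper's identity $\det(M_{L-1}^k)=(-1)^a\,\dres^h(H_1,\dots,H_n)\det(S_k)$, and the nonvanishing of some $\det(S_k)$ must itself be deduced from $\dres^h(H_1,\dots,H_n)\neq 0$ via the $u_{j\,N-1}$ columns of $M(L^h)$. As written you assert that the completion exists without supplying this input, so both halves of the $(2)\Leftrightarrow(3)$ equivalence remain open in your proposal.
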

\begin{proof}
The matrix $M (L^h)$ is a square submatrix of $M_{L-1}$ so 3
implies 2. Given $i\in\{1,\ldots ,n\}$, we call $M_{L-1}^i$ the
submatrix of $M_{L-1}$ obtained by removing the row corresponding
to the coefficients of $\partial^{N-o_i} F_i$. Thus, if $\dres_h
(H_1,\ldots ,H_n)\neq 0$ then by Remark ~\ref{S_dresh} there
exists $k\in \{1,\ldots ,n-1\}$ such that $\det(S_k)\neq 0$.
Furthermore, there exists $a\in\bbN$ such that
\begin{displaymath}
\det(M_{L-1}^k)=(-1)^a \dres_h (H_1,\ldots ,H_n) \det(S_k)\neq 0,
\end{displaymath}
which proves that 2 implies 3.

Let $\bbK(X)$ be the quotient field of $\bbK\{X\}$. The
equivalence of 1 and 3 is obtained easily noting that the elements
of the last column of $M(L)$ are linearly independent on $\bbK(X)$
and so the last column of $M(L)$ is linearly independent on
$\bbK(X)$ from the first $L-1$ columns.
\end{proof}

\section{Implicitization of linear
DPPEs by \newline Carra'Ferros's differential resultant}\label{sec-Implicitization of linear DPPEs I}

Let $\cP(X,U)$, $F_i$, $H_i$ be as in Section \ref{sec-Basic
notions and notation}. Let $\bbD=\bbK\{X\}$, $\ps=\ps(F_1,\ldots,F_n)$ and let $\id$ be the implicit ideal of
$\cP(X,U)$. In this section, we use the implicitization results in terms of characteristic sets given in \cite{Gao} to obtain
implicitization results in terms of differential resultants. We prove that if $\dres (F_1,\ldots ,F_n)\neq 0$ then $\dres (F_1,\ldots ,F_n)(X)=0$ is
the implicit equation of $\cP(X,U)$.

\para

Recall that $\ps$ is a set of linear differential polynomials. Let $[\ps]$ be the
differential ideal generated by $\ps$, then it holds
$\id=[\ps]\cap \bbK\{X\}$ by \cite{Gao}, Lemma 3.2. Let $\cA$ be a
characteristic set of $[\ps]$ and $\cA_0=\cA\cap\bbK\{X\}$. By
\cite{Gao}, Theorem 3.1, the implicit ideal is
\[\id=[\ps]\cap \bbK\{X\}=[\cA_0].\]
Furthermore, if $\mid \cA_0\mid =1$
then $\cA_0=\{A(X)\}$ where $A$ is a characteristic polynomial of $\id$.
By \cite{CFproc}, Proposition 12, $\dres (F_1,\ldots
,F_n)\in \id$ and it is our candidate to be a characteristic polynomial of $\id$.

\para

Let $(\ps)$ be the ideal in
$\bbK[x_i,\ldots ,x_{i N-o_i} ,u_j,\ldots ,u_{j N}\mid i=1,\ldots
n, j=1,\ldots n-1 ]$ generated by $\ps$. To compute a
characteristic set of $\id$ we will use a Groebner basis
$\cG$ of $(\ps)$ with respect to the ranking $\cR^\star$ on $X\cup U$ that eliminates $U$ with
respect to $X$, that is $\partial^k x< \partial^{k^\star} u$, for all
$x\in X$, $u\in U$ and $k,k^\star\in\bbN_0$.

\begin{lem}\label{B0}
Let $\cG$ be the reduced Groebner basis of $(\ps)$ with respect to
the ranking $\cR^\star$.
\begin{enumerate}
\item $\cG=\{B_0,B_1,\ldots ,B_{L-1}\}$ where $B_0<B_1<\cdots
<B_{L-1}$ with respect to the ranking $\cR^\star$  and $\cG_0=\cG\cap
\bbK\{X\}$ is not empty.

\item Let $E(L)$ be the $L\times L$ matrix whose $k$-th row
contains the coefficients of $B_{L-k}$, $k=1,\ldots ,L$ as a polynomial in $\bbD\{U\}$, and where the coefficients are
written in decreasing order with respect to the orderly ranking on
$U$. Given the
differential resultant matrix $M(L)$ of $F_1,\ldots F_n$, then
$\det (M(L))=(-1)^a\det E (L)$ for some $a\in \bbN$.

\item The cardinality of $\cG_0$ is one if and only if $\dres^h (H_1,\ldots ,H_n)\neq 0$.
\end{enumerate}
\end{lem}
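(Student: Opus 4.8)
The plan is to exploit the fact that the reduced Gröbner basis $\cG$ of the ideal $(\ps)$ generated by a set of $L$ linear polynomials in $N' := \sum_i (N-o_i+1) + (n-1)(N+1)$ variables is itself a set of linear polynomials, and that the matrix $E(L)$ of its coefficients is obtained from the coefficient matrix $M(L)$ of $\ps$ by elementary row operations followed by reindexing.

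For part 1, I would first observe that since $(\ps)$ is generated by $L$ linear polynomials whose leading monomials under $\cR$ are the $L$ distinct monomials $\lead_{\cR}(G_\ell)$, the polynomials in $\ps$ are already a triangular linear system, hence linearly independent; thus the $\bbK$-vector space they span inside the linear part of the polynomial ring has dimension exactly $L$. Passing to the ranking $\cR^\star$, the reduced Gröbner basis of the ideal generated by this $L$-dimensional space of linear forms consists of exactly $L$ linear polynomials $B_0 < \cdots < B_{L-1}$ (one for each pivot of the row-echelon form with respect to $\cR^\star$); reducedness forces each $B_i$ to be a $\bbK$-linear combination of the $G_\ell$'s with distinct $\cR^\star$-leading monomials. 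That $\cG_0 = \cG\cap\bbK\{X\}$ is nonempty follows because $\cR^\star$ eliminates $U$ with respect to $X$ and, by \cite{Gao}, Lemma 3.2 together with the fact that $\id = [\ps]\cap\bbK\{X\} = [\cA_0]$ is a nonzero ideal (the parametrization is nonconstant by Remark \ref{rem-ordFi}), there must be at least one basis element supported only on $\{X\}$; concretely, $B_0$, being $\cR^\star$-minimal, has its leading term among the $X$-variables and in fact lies in $\bbK\{X\}$.

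For part 2, the key point is linear-algebraic: both $M(L)$ and $E(L)$ are the coefficient matrices of two bases of the \emph{same} $L$-dimensional $\bbK$-vector space $V$ spanned by $\ps$ (equivalently by $\cG$), once we fix the common column indexing by monomials in $\bbD\{U\}$ ordered by the orderly ranking on $U$. Hence there is an invertible $L\times L$ matrix $P$ over $\bbK$ (the change-of-basis matrix, which is a product of elementary matrices) with $E(L) = P\cdot M(L)$ up to the row permutation implicit in the indexing conventions (row $k$ of $M(L)$ holds the $(L-k+1)$-th polynomial of $\ps$, row $k$ of $E(L)$ holds $B_{L-k}$). Taking determinants gives $\det E(L) = \det(P)\det M(L)$. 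To pin down $\det(P) = \pm 1$ I would track that $\cG$ is the \emph{reduced} Gröbner basis and that $\ps$ is already in triangular form under $\cR$: the transition between these two triangular forms is achieved by operations of determinant $\pm 1$ (unit pivots, since all leading coefficients are $1$ after the reduction Carra'Ferro performs), so $\det M(L) = (-1)^a \det E(L)$ for the appropriate sign $a\in\bbN$ coming from the two row-reversal conventions.

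For part 3, I would argue as follows. By Lemma \ref{B0}(2) and the definition of $\dres$, $\det M(L) = 0 \iff \det E(L) = 0$, and the latter holds iff the $L$ polynomials $B_0,\dots,B_{L-1}$ fail to be linearly independent as elements of $\bbD\{U\}$ — but they are always linearly independent over $\bbK$ by reducedness, so $\det E(L) = 0$ detects a \emph{nontrivial} $X$-dependence among rows, which forces the last column (the pure-$X$ column $x_i - a_i$ contributions) to be dependent on the others over $\bbK(X)$; this is exactly the statement $\dres^h(H_1,\ldots,H_n)=0$ via Proposition \ref{dRes0}. Finally, I would relate $|\cG_0|$ to this: $B_0 \in \cG_0$ always, and a second element of $\cG$ lies in $\bbK\{X\}$ precisely when, after eliminating all $U$-variables, more than one independent $X$-relation survives — equivalently when the principal $L\times(L-1)$ submatrix $M_{L-1}$ of $M(L)$ (the $U$-columns) has rank $< L-1$, equivalently $\dres^h(H_1,\ldots,H_n)=0$ by Proposition \ref{dRes0}. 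Hence $|\cG_0| = 1 \iff \dres^h(H_1,\ldots,H_n)\neq 0$. The main obstacle I anticipate is making the correspondence in part 3 between "$\cG_0$ has exactly one element" and "the $U$-columns of $M(L)$ have full rank $L-1$" fully rigorous: one must verify that the Gröbner basis computation under $\cR^\star$ produces exactly one pure-$X$ polynomial when the elimination of $U$ is "as efficient as possible," and this requires carefully comparing the echelon form of $M(L)$ with respect to the two opposite rankings and invoking Proposition \ref{dRes0} to translate the rank condition. The rest is bookkeeping with elementary row operations and sign conventions.
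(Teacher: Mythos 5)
Your overall strategy is the one the paper uses: since $\ps$ consists of linear polynomials, the reduced Groebner basis computation for $(\ps)$ under $\cR^\star$ is Gaussian elimination on the $L\times (2L)$ coefficient matrix $M_{2L}=[\,M_{L-1}\mid \ast\,]$ of $\ps$ written in decreasing $\cR^\star$-order; part 2 then follows because $E(L)$ is obtained from $M(L)$ by the same row operations, and part 3 reduces via the echelon form to the equivalence $|\cG_0|=1\Leftrightarrow \rank(M_{L-1})=L-1\Leftrightarrow \dres^h(H_1,\ldots,H_n)\neq 0$ of Proposition~\ref{dRes0}. Those two parts of your plan match the paper's proof essentially step for step (including the same unexamined point about why the change-of-basis determinant is $\pm1$ rather than merely a nonzero constant of $\bbK$, which is all that Theorem~\ref{implicit} actually needs).

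The one genuine gap is your justification that $\cG_0=\cG\cap\bbK\{X\}$ is nonempty. You derive it from \cite{Gao}, Lemma 3.2 and the fact that the implicit ideal $\id=[\ps]\cap\bbK\{X\}$ is nonzero; but $\id$ sits inside the \emph{differential} ideal $[\ps]$, which contains all derivatives $\partial^k F_i$, whereas $\cG$ is a Groebner basis of the \emph{algebraic} ideal $(\ps)$ generated only by the truncated set $\{\partial^k F_i: 0\le k\le N-o_i\}$. A nonzero element of $[\ps]\cap\bbK\{X\}$ need not lie in $(\ps)$, so this does not produce an element of $\cG$ free of the $U$-variables. The subsequent claim that ``$B_0$, being $\cR^\star$-minimal, \ldots in fact lies in $\bbK\{X\}$'' is circular: minimality forces $B_0\in\bbK\{X\}$ only once you already know \emph{some} element of $\cG$ lies there. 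The paper closes this with a pure counting argument you already have the ingredients for: $M_{2L}$ has rank $L$ because of the identity block in the $X$-columns, while the $U$-block $M_{L-1}$ has only $L-1$ columns; hence at least one pivot of the reduced echelon form $E_{2L}$ falls outside the $U$-columns, and the corresponding row, having zeros in all $U$-columns, is a polynomial of $\cG$ lying in $\bbK\{X\}$. Replace your appeal to Gao with this rank count and the proof is complete.
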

\begin{proof}
Let $M_{2L}$ be the $L\times (2L)$ matrix whose $k$-th row
contains the coefficients of the $k$-th polynomial in $\ps$, as a
polynomial in $\bbK\{X\cup U\}$, and where the coefficients are
written in decreasing order w.r.t. $\cR^\star$.

\[M_{2L}=\left[\begin{array}{cc}M_{L-1}&
\begin{array}{cccccccc}1& & & & & & &\partial^{N-o_1} a_1\\ &\ddots & & & & & &\vdots \\ & & 1& & & & & a_1 \\  & & &\ddots & & & &\vdots \\ & & & & 1 & & &\partial^{N-o_n} a_n\\ & & & & &\ddots & &\vdots \\  & & & & & & 1 &a_n \end{array}
\end{array}\right].\]

Observe that we can find a minimal Groebner basis of $(\ps)$ performing gaussian
elimination on the rows of $M_{2L}$, see \cite{Cox}, Section 7,
exercise 10. In particular, the polynomials corresponding to the
rows of the reduced echelon form $E_{2L}$ of $M_{2L}$ are the
elements of the reduced Groebner basis $\cG$ of $(\ps)$.

\begin{enumerate}
\item Observe that $M_{2L}$ has rank $L$. Therefore $\cG$ contains
$L$ elements $B_0<B_1<\cdots <B_{L-1}$. The submatrix formed by the first
$L-1$ columns of $M_{2L}$ is $M_{L-1}$ which has rank less than or
equal to $L-1$, therefore the set $\cG_0=\cG\cap \bbK\{X\}$ is not
empty.

\item The matrix $E(L)$ is obtained by performing on $M(L)$ the same
row operations as the operations performed on $M_{2L}$ to obtain $E_{2L}$.
Therefore $\det (M(L))=(-1)^a\det E (L)$ for some $a\in\bbN$.

\item The cardinality of $\cG_0$ is $1$ if and only if $\rank
(M_{L-1})=L-1$. Equivalently, $\dres^h (H_1,\ldots ,H_n)\neq 0$ by
Proposition ~\ref{dRes0}.
\end{enumerate}
\end{proof}

To compute a characteristic set of $[\ps]$ we apply the algorithm given in
\cite{BL}, Theorem 6, that we briefly include below for completion.
Given $P\in\bbK\{X\cup U\}$, the {\sf lead} of $P$ is the highest derivative
present in $P$ w.r.t. $\cR^\star$, we denote it by $\lead (P)$.
Given $P,Q\in \bbK\{X\cup U\}$ we denote by $\prem (P,Q)$ the
{\sf pseudo-remainder} of $P$ with respect to $Q$, \cite{Ritt}, page 7.
Given a chain $\cA=\{A_1,\ldots ,A_t\}$ of elements of $\bbK\{X\cup U\}$ then
$\prem (P,\cA)=\prem (\prem (P,A_t), \{A_1,\ldots
,A_{t-1}\})$ and $\prem (P,\emptyset)=P$.

\para

\begin{alg}\label{alg-characteristic set}
{\sf Given} the set of polynomials $\ps$ the next algorithm {\sf
returns} a characteristic set of $[\ps]$.
\begin{enumerate}
\item Compute the reduced Groebner basis $\cG$ of $(\ps)$ with
respect to $\cR^\star$.

\item Assume that the elements of $\cG$ are arranged in increasing
order $B_0<B_1<\cdots <B_{L-1}$ w.r.t. $\cR^\star$. Let $\cA=\{B_0\}$.
For $i$ from $1$ to $L-1$ do, if \lead$(B_i)\neq$\lead$(B_{i-1})$
then $\cA:=\cA \cup\{\mbox{\prem}(B_i,\cA)\}$.
\end{enumerate}
\end{alg}

\begin{thm}\label{implicit}
Given a system $\cP (X,U)$ of linear DPPEs with implicit ideal
$\id$. If $\dres^h (H_1,\ldots ,H_n)\neq 0$ then $\id$ has
dimension $n-1$ and
\begin{displaymath}
\dres (F_1,\ldots ,F_n)(X) =0
\end{displaymath}
is the implicit equation of $\cP (X,U)$.
\end{thm}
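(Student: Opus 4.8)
The plan is to leverage the machinery already assembled in Section~\ref{sec-Implicitization of linear DPPEs I}, in particular Lemma~\ref{B0} and Algorithm~\ref{alg-characteristic set}, together with the characteristic-set implicitization results of Gao quoted there. The strategy has two halves: first, show that the hypothesis $\dres^h(H_1,\ldots,H_n)\neq 0$ forces $\cG_0=\cG\cap\bbK\{X\}$ to be a singleton, hence (after running Algorithm~\ref{alg-characteristic set}) that $\cA_0=\cA\cap\bbK\{X\}$ is a singleton $\{A(X)\}$ with $A$ a characteristic polynomial of $\id$; and second, identify that single polynomial, up to a factor in $\bbK$, with $\dres(F_1,\ldots,F_n)(X)$ via the determinantal comparison $\det(M(L))=(-1)^a\det E(L)$ of Lemma~\ref{B0}(2).

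First I would invoke Lemma~\ref{B0}(3): since $\dres^h(H_1,\ldots,H_n)\neq 0$, the cardinality of $\cG_0$ is exactly one, say $\cG_0=\{B_0\}$ with $B_0\in\bbK\{X\}$. Running Algorithm~\ref{alg-characteristic set} produces a characteristic set $\cA$ of $[\ps]$, and because pseudo-remainders do not change the leads, $\cA_0=\cA\cap\bbK\{X\}$ has the same cardinality as $\cG_0$, namely one; write $\cA_0=\{A(X)\}$. By Gao (\cite{Gao}, Theorem 3.1) quoted above, $\id=[\ps]\cap\bbK\{X\}=[\cA_0]=[A]$, so $\id$ is generated as a differential ideal by the single polynomial $A$; since $\id$ is prime of the form $[\{A\}]$ with $\cA$ a characteristic set, $\id$ has dimension $n-1$ and $A$ is a characteristic polynomial of $\id$, giving the dimension claim and the fact that $A(X)=0$ is \emph{an} implicit equation.

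It remains to show $A$ and $\dres(F_1,\ldots,F_n)(X)$ differ by a nonzero factor in $\bbK$. Here I would argue on the matrix side. The row of $E(L)$ corresponding to $B_0$ is, by construction of $E(L)$ as the reduced echelon form transported to $M(L)$, the coefficient vector of the unique element of $\cG_0$, so $\det E(L)$ is — up to sign and up to the product of the (nonzero, in $\bbK$) pivots of the other rows — equal to $B_0=A$ (or a $\bbK$-multiple thereof; note that along the $X$-block the pivots lie in $\bbK$ because $M(L)$ is, apart from its last column, a matrix over $\bbK$). Combining with Lemma~\ref{B0}(2), $\dres(F_1,\ldots,F_n)(X)=\det(M(L))=(-1)^a\det E(L)=c\,A(X)$ for some nonzero $c\in\bbK$. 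Finally, $\dres(F_1,\ldots,F_n)\neq 0$ is guaranteed by Proposition~\ref{dRes0} (equivalence of statements 1 and 2) under our hypothesis, so the resultant is a genuine nonzero $\bbK$-multiple of the characteristic polynomial and hence itself a characteristic polynomial; thus $\dres(F_1,\ldots,F_n)(X)=0$ is the implicit equation of $\cP(X,U)$.

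The main obstacle I anticipate is the bookkeeping in the second half: tracking precisely how the reduced echelon form $E_{2L}$ of $M_{2L}$ restricts to $E(L)$, verifying that the pivot entries outside the $\cG_0$ row are units in $\bbK$ (so that the determinant factors cleanly and $c\neq 0$), and confirming that the single row landing in $\bbK\{X\}$ really is a $\bbK$-scalar multiple of $B_0$ rather than something reduced further by Algorithm~\ref{alg-characteristic set}. Care is also needed to ensure the pseudo-remainder step in Algorithm~\ref{alg-characteristic set} does not spuriously enlarge or shrink $\cA_0$; this is where the hypothesis that all $F_i$ are \emph{linear} is doing real work, since then all leads are linear derivatives and pseudo-division is ordinary division by a coefficient in $\bbK$.
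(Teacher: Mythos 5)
Your proposal is correct and follows essentially the same route as the paper: Lemma~\ref{B0}(3) gives $\cG_0=\{B_0\}$ and hence a characteristic set with $\cA_0=\{B_0\}$ (so $\dim\id=n-1$), and the identity $\det(M(L))=(-1)^a\det E(L)$ from Lemma~\ref{B0}(2) identifies $\dres(F_1,\ldots,F_n)$ with $cB_0$, $c\in\bbK$. Your additional remarks — using Proposition~\ref{dRes0} to guarantee $c\neq 0$ and checking that the pivots lie in $\bbK$ — are sound elaborations of steps the paper leaves implicit.
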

\begin{proof}
Let $\cG$ be the reduced Groebner basis of $(\ps)$ with respect to
the ranking $\cR^\star$ and let $B_0<B_1<\cdots <B_{L-1}$ be the
elements of $\cG$. By Lemma ~\ref{B0} we have
$\cG_0=\cG\cap\bbK\{X\}=\{B_0\}$ and so there exists a
characteristic set $\cA$ of $[\ps]$ such that $\cA_0 =\{B_0\}$.
Consequently, the dimension of $\id$ is $n-1$.

By Lemma ~\ref{B0} and the definition of the differential
resultant,
\[\dres (F_1,\ldots ,F_n)=\det(M(L))= (-1)^a \det
(E(L)),\] for some $a\in\bbN$, therefore $\dres (F_1,\ldots
,F_n)=c B_0$ with $c\in \bbK$. This proves that $\dres (F_1,\ldots
,F_n)$ is a characteristic polynomial of $\id$ and therefore the
implicit equation of $\cP (X,U)$ is $\dres (F_1,\ldots
,F_n)(X)=0$.
\end{proof}

\section{Linear Complete Differential Resultants}\label{sec-Linear gamma-Differential Resultants}

Let $\bbD$ be a differential integral domain, and let
$f_i\in\bbD\{U\}$ be a linear ordinary differential polynomial of
order $o_i$, $i=1,\ldots ,n$. For each $j\in \{1,\ldots n-1\}$ we
define the positive integer
\begin{displaymath}
\gamma_j (f_1,\ldots ,f_n)=\min\{o_i-\ord (f_i,u_j)\mid
i\in\{1,\ldots ,n\}\}.
\end{displaymath}
Observe that, $0\leq \gamma_j (f_1,\ldots ,f_n)\leq o_i$ for each $i\in\{1,\ldots
,n\}$. We also define
\begin{displaymath}
\gamma (f_1,\ldots ,f_n)=\sum_{j=1}^{n-1} \gamma_j
(f_1,\ldots ,f_n)
\end{displaymath}
and it is easily proved that $0\leq\gamma (f_1,\ldots ,f_n) \leq
N-o_i$, for all $i\in\{1,\ldots ,n\}$.

Observe that, when $\gamma (f_1,\ldots ,f_n) \neq 0$ then the set
$\ps (f_1,\ldots ,f_n)$ defined in Section ~\ref{sec-Differential Resultants}, is a set of $L$ differential polynomials
in $L-\gamma (f_1,\ldots ,f_n)-1$ differential variables. Then
$\dres (f_1,\ldots ,f_n)$ is the Macaulay's resultant of a set of
polynomials which are not complete in all its variables, thus
$\dres (f_1,\ldots ,f_n)=0$. If $\gamma (f_1,\ldots ,f_n)= 0$ we
will say that the set of differential polynomials $\{f_1,\ldots
,f_n\}$ is {\sf complete}. We will call $\gamma (f_1,\ldots ,f_n)$
the {\sf completeness index} of $\{f_1,\ldots ,f_n\}$.

We define next a differential resultant that generalizes
Carra'Ferro's differential resultant in the linear case. Observe
that the generalized differential resultant in \cite{CF05} deals
with a different aspect.

\begin{defi}
The {\sf linear complete differential resultant}
$\dcres (f_1,\ldots,f_n)$, of $n$ linear ordinary
differential polynomials $f_1,\ldots ,f_n$ in $n-1$ differential
variables $u_1,\ldots ,u_{n-1}$, is defined as the Macaulay's algebraic
resultant of the differential polynomial set
\begin{align*}
&\ps_{\gamma}(f_1,\ldots ,f_n):=\\
&\{\partial^{N-o_i-\gamma} f_i,\ldots ,\partial f_i, f_i \mid
i=1,\ldots ,n,\,\,\,N=\sum_{i=1}^n o_i,\,\,\, \gamma =\gamma
(f_1,\ldots ,f_n)\}.
\end{align*}
\end{defi}
The set $\ps_{\gamma}(f_1,\ldots ,f_n)$ contains
$L_{\gamma}=\sum_{i=1}^n (N-o_i-\gamma+1)$ polynomials in the following set
$\cV$ of $L_{\gamma}-1$ differential variables
\begin{equation}\label{eq-V}
\cV =\{u_j,\ldots ,u_{j N-\gamma_j-\gamma} \mid \gamma_j=\gamma_j
(f_1,\ldots ,f_n), j=1,\ldots ,n-1\}.
\end{equation}
Observe that $\sum_{j=1}^{n-1}(N-\gamma_j-\gamma+1)=(n-1)N-n\gamma
+n-1 =L_{\gamma}-1$.

\para
Let $h_i\in\bbD\{U\}$ be a linear ordinary
differential homogeneous polynomial of order $o_i$, $i=1,\ldots ,n$.
\begin{defi}
We define the {\sf linear complete differential homogenous resultant}
$\dcres^h (h_1,\ldots ,h_n)$, of $n$ linear ordinary
differential homogeneous polynomials $h_1,\ldots ,h_n$ in $n-1$
variables, as the Macaulay's algebraic resultant of the
differential polynomial set
\begin{align*}
&\ps_{\gamma}^h(h_1,\ldots ,h_n):=\\
&\{\partial^{N-o_i-\gamma -1}h_i,\ldots ,\partial h_i, h_i \mid
i=1,\ldots ,n,\,\,\,N=\sum_{i=1}^n o_i,\,\,\, \gamma =\gamma
(h_1,\ldots ,h_n)\}.
\end{align*}
\end{defi}
The set $\ps_{\gamma}^h(h_1,\ldots ,h_n)$ contains
$L_{\gamma}^h=\sum_{i=1}^n (N-o_i-\gamma)$ polynomials in the following set
$\cV^h$ of $L_{\gamma}^h-1$ differential variables
\begin{align*}
\cV^h =\{u_j,\ldots ,u_{j N-\gamma_j-\gamma -1}\mid
\gamma_j=\gamma_j (h_1,\ldots ,h_n), j=1,\ldots ,n-1\}.
\end{align*}

\para

Observe that for $\gamma(f_1,\ldots ,f_n)=0$ (resp. $\gamma (h_1,\ldots ,h_n)=0$)
it holds 
\begin{align*}
\dcres (f_1,\ldots ,f_n)&=\dres (f_1,\ldots ,f_n),\\ 
\dcres^h (h_1,\ldots ,h_n)&=\dres^h (h_1,\ldots ,h_n).
\end{align*}

We introduce next the matrices that will allow the use of
determinants to compute $\dcres (f_1,\ldots ,f_n)$ and $\dcres^h
(h_1,\ldots ,h_n)$. For $i=1,\ldots ,n$, $\gamma=\gamma
(f_1,\ldots ,f_n)$ (resp. $\gamma=\gamma (h_1,\ldots ,h_n)$) and
$k=1,\ldots ,N-o_i-\gamma$ (resp. $k=1,\ldots ,N-o_i-\gamma-1$)
define de positive integers,
\begin{align*}
l(i,k)&=(i-1)(N-\gamma)-\sum_{h=1}^{i-1}o_i+i+k,\\
l^h(i,k)&=(i-1)(N-\gamma-1)-\sum_{h=1}^{i-1}o_i+i+k.
\end{align*}
Then $l(i,k)\in \{1,\ldots ,L_{\gamma}\}$ and $l^h(i,k)\in
\{1,\ldots ,L_{\gamma}^h\}$, with the appropriate value of
$\gamma$ in each case.

Let $M(L_{\gamma})$ be the $L_{\gamma}\times L_{\gamma}$ (resp.
$L_{\gamma}^h\times L_{\gamma}^h$) matrix containing the
coefficients of $\partial^{N-o_i-\gamma -k}f_i$ ( resp.
$\partial^{N-o_i-\gamma -k-1} h_i$) as a polynomial in $\bbD[\cV]$ (resp. in $\bbD[\cV^h]$)
in the $l(i,k)$-th row  (resp. $l^h(i,k)$-th row), where the coefficients are written in
decreasing order with respect to the orderly ranking on $U$.
Hence, $M(L_{\gamma})$ (resp. $M(L_{\gamma}^h)$) is a matrix over
$\bbD$ that we call the {\sf complete differential (homogeneous)
resultant matrix} of $f_1,\ldots ,f_n$ (resp. $h_1,\ldots ,h_n$).
In this situation:
\begin{align*}
\dcres (f_1,\ldots ,f_n)&=\det(M(L_{\gamma})),\\
\dcres^h (h_1,\ldots ,h_n)&=\det(M(L_{\gamma}^h)).
\end{align*}

The next matrices will be used in the proofs of the results in
this section and also in later results in the paper. Let $S_{\gamma}$ be the $n\times (n-1)$ matrix whose
entry $(i,j)$ is the coefficient of $u_{j o_i-\gamma_j}$ in $f_i$,
$i\in\{1,\ldots ,n\}$, $j\in\{1,\ldots ,n-1\}$ For $i\in\{1,\ldots
n\}$ let $S_{\gamma i}$ be the $(n-1)\times (n-1)$ matrix obtained
removing the $i$-th row of $S_{\gamma}$.

\begin{rem}\label{rem-S_dreshgamma}
Let us suppose that $h_i$ is the homogeneous part of $f_i$,
$i=1,\ldots , n$, then $\gamma=\gamma (f_1,\ldots ,f_n)=\gamma
(h_1,\ldots ,h_n)$. Note that the nonzero rows of the columns of
$M(L_{\gamma })$ ( resp. $M(L_{\gamma}^h)$) corresponding to the
coefficients of $u_{j N-\gamma}$ (resp. $u_{j N-\gamma -1}$),
$j=1,\ldots ,n-1$ are the rows of $S_{\gamma}$.
\end{rem}

\begin{thm}\label{gdrthm1}
\begin{enumerate}
\item If the system $\{h_1=0,\ldots ,h_n=0\}$ has a nonzero
solution then $\dcres^h (h_1,\ldots ,h_n)=0$.

\item Let us suppose that $h_i$ is the homogeneous part of $f_i$,
$i=1,\ldots , n$. Let $M_{L_{\gamma}-1}$ be the $L_{\gamma}\times
(L_{\gamma}-1)$ principal submatrix of $M(L_{\gamma})$. Then,
\begin{enumerate}

\item $\dcres^h (h_1,\ldots ,h_n)\neq 0$ if and only if $\rank( M_{L_{\gamma}-1})=L_{\gamma}-1$,

\item if $\dcres (f_1,\ldots ,f_n)\neq 0$  then $\dcres^h (h_1,\ldots ,h_n)\neq 0$.
\end{enumerate}
\end{enumerate}
\end{thm}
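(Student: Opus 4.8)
The three claims mirror exactly the structure of Proposition~\ref{dResh1} and Proposition~\ref{dRes0}, but with the complete analogues of all the objects involved, so the plan is to follow those proofs verbatim with the obvious substitutions ($N-o_i \mapsto N-o_i-\gamma$ for $f_i$, $N-o_i-1 \mapsto N-o_i-\gamma-1$ for $h_i$, $S \mapsto S_\gamma$, $\ps \mapsto \ps_\gamma$, etc.). For part~1, I would argue as in Proposition~\ref{dResh1}: if $\eta\in\bbF^{n-1}$ is a nonzero solution of $\{h_1=0,\ldots,h_n=0\}$ for some differential field extension $\bbF$ of $\bbK$, then $\eta$ (together with its derivatives, plugged into the variables of $\cV^h$) is a nonzero solution of the linear homogeneous system whose coefficient matrix is $M(L_\gamma^h)$, namely $\{\partial^{N-o_i-\gamma-1}h_i=0,\ldots,\partial h_i=0,h_i=0\mid i=1,\ldots,n\}$. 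Hence the columns of $M(L_\gamma^h)$ are linearly dependent over $\bbF$, so $\det(M(L_\gamma^h))=\dcres^h(h_1,\ldots,h_n)=0$.

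For part~2(a), I would reproduce the argument in Proposition~\ref{dRes0} that ``2 $\Leftrightarrow$ 3''. One direction is immediate: $M(L_\gamma^h)$ is a square submatrix of $M_{L_\gamma-1}$ (the matrix $M(L_\gamma)$ with its last column removed), since the extra column and the $n$ extra rows of $M(L_\gamma)$ beyond those of $M(L_\gamma^h)$ are exactly the $u_{jN-\gamma}$-columns and the top rows $\partial^{N-o_i-\gamma}f_i$ — so $\rank(M_{L_\gamma-1})=L_\gamma-1$ forces $\dcres^h(h_1,\ldots,h_n)\ne 0$. Conversely, suppose $\dcres^h(h_1,\ldots,h_n)\ne0$. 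By Remark~\ref{rem-S_dreshgamma} the nonzero rows of the $u_{jN-\gamma}$-columns of $M(L_\gamma)$ form the matrix $S_\gamma$; expanding the determinant of the $(L_\gamma-1)\times(L_\gamma-1)$ submatrix $M_{L_\gamma-1}^k$ (obtained by deleting the row of $\partial^{N-o_k-\gamma}f_k$) along those $n-1$ columns, a Laplace expansion gives
\begin{displaymath}
\det(M_{L_\gamma-1}^k)=(-1)^{a}\,\dcres^h(h_1,\ldots,h_n)\,\det(S_{\gamma k})
\end{displaymath}
for a suitable $a\in\bbN$. Since $\dcres^h(h_1,\ldots,h_n)\ne0$ and $S_{\gamma k}$ is an $(n-1)\times(n-1)$ minor of a matrix of rank $n-1$ (the full $S_\gamma$ must have rank $n-1$, for otherwise $\{h_1=0,\ldots,h_n=0\}$ would have a nonzero solution in the leading terms, contradicting $\dcres^h\ne0$ via part~1 — or, more directly, by Remark~\ref{rem-S_dreshgamma} the relevant columns of $M(L_\gamma^h)$ would be dependent), we may choose $k$ with $\det(S_{\gamma k})\ne0$, whence $\rank(M_{L_\gamma-1})=L_\gamma-1$.

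For part~2(b), I would argue as in the ``1 $\Leftrightarrow$ 3'' part of Proposition~\ref{dRes0}: the last column of $M(L_\gamma)$ consists of the coefficients $x_i-a_i$ (and their derivatives $x_{ik}-\partial^k a_i$), which are linearly independent over $\bbK$ and, crucially, do not lie in the $\bbK(X)$-span of the first $L_\gamma-1$ columns (those columns have entries in $\bbK$, the homogeneous coefficients). Hence $\dcres(f_1,\ldots,f_n)=\det(M(L_\gamma))\ne0$ forces $\rank(M_{L_\gamma-1})=L_\gamma-1$, which by part~2(a) gives $\dcres^h(h_1,\ldots,h_n)\ne0$.

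The only place requiring genuine care — and the step I expect to be the main obstacle — is the Laplace-expansion identity in part~2(a): one must check that after deleting the last column and the row of $\partial^{N-o_k-\gamma}f_k$, the $u_{jN-\gamma}$-columns of the remaining matrix really do have their only nonzero entries in the $n-1$ rows forming $S_{\gamma k}$, so that the cofactor expansion along those $n-1$ columns cleanly produces the product $\pm\det(S_{\gamma k})\det(M(L_\gamma^h))$. This is a bookkeeping claim about which shifted polynomial $\partial^m f_i$ contributes a $u_{jN-\gamma}$-term, and it follows from the definition of $\gamma_j$ (the variable $u_{j,o_i-\gamma_j}$ is the top $u_j$-derivative in $f_i$ for the index $i$ achieving the minimum) together with the indexing functions $l(i,k)$; once this is verified, the sign $(-1)^a$ and the rest are routine, exactly as in Proposition~\ref{dRes0}.
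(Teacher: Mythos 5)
Your proof is correct and follows essentially the same route as the paper's: part 1 by the column-dependence argument of Proposition~\ref{dResh1}, part 2(a) by the identity $\det(M_{L_{\gamma}-1}^k)=(-1)^a\,\dcres^h(h_1,\ldots,h_n)\det(S_{\gamma k})$ together with the observation that $M(L_{\gamma}^h)$ sits inside $M_{L_{\gamma}-1}$, and part 2(b) by reducing to $\rank(M_{L_{\gamma}-1})=L_{\gamma}-1$ and invoking 2(a). The paper merely declares these steps ``analogous'' to Propositions~\ref{dResh1} and~\ref{dRes0}, whereas you spell out the bookkeeping; the content is the same.
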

\begin{proof}
\begin{enumerate}
\item The proof is analogous to the proof of Proposition
~\ref{dResh1}.

\item The matrix $M (L_{\gamma}^h)$ is a square submatrix of
$M_{L_{\gamma-1}}$. Given $i\in\{1,\ldots ,n\}$, we
call $M_{L_{\gamma}-1}^i$ the submatrix of $M_{L_{\gamma}-1}$
obtained by removing the row corresponding to the coefficients of
$\partial^{N-o_i-\gamma} f_i$. Thus, if $\dcres^h
(h_1,\ldots ,h_n)\neq 0$ then by Remark ~\ref{S_dresh} there
exists $k\in \{1,\ldots ,n-1\}$ such that $\det(S_{\gamma k})\neq
0$. Furthermore, there exists $a\in\bbN$ such that
\begin{displaymath}
\det(M_{L_{\gamma}-1}^k)=(-1)^a \dcres^h (h_1,\ldots ,h_n)
\det(S_{\gamma k})\neq 0,
\end{displaymath}
which proves the equivalence (a). Since $\dcres (f_1,\ldots
,f_n)\neq 0$ implies $\rank( M_{L_{\gamma}-1})=L_{\gamma}-1$,
statement (b) follows.
\end{enumerate}
\end{proof}

\begin{cor}
Let $F_i$ and $H_i$ be as in Section ~\ref{sec-Basic notions and notation} then
$\dcres (F_1,\ldots ,F_n)= 0$ if and only if $\dcres^h (H_1,\ldots ,H_n)= 0$.
\end{cor}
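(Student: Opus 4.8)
The plan is to deduce the corollary directly from Theorem~\ref{gdrthm1} together with the structural relationship between $F_i$, $H_i$ and the matrix $M(L_\gamma)$. The key observation is that, by construction, $H_i$ is precisely the homogeneous part of $F_i$ (from Section~\ref{sec-Basic notions and notation}, $F_i = T_i(X) + H_i(U)$ with $T_i(X) = x_i - a_i$ a polynomial in $\bbD = \bbK\{X\}$ of degree one in the $X$-variables, hence of degree zero when $F_i$ is regarded as a polynomial in the $U$-variables). Therefore $\gamma(F_1,\ldots,F_n) = \gamma(H_1,\ldots,H_n)$, and the hypotheses of part~2 of Theorem~\ref{gdrthm1} are met with this pair.

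First I would record that, since the differential resultant matrix $M(L_\gamma)$ of $F_1,\ldots,F_n$ differs from $M(L_\gamma^h)$ only by the extra rows and the final column carrying the $X$-monomials $\partial^{N-o_i-\gamma-k} T_i$ (mirroring the structure already exploited in Lemma~\ref{B0} and Proposition~\ref{dRes0} in the $\gamma = 0$ case), the matrix $M(L_\gamma^h)$ is a square submatrix of the principal submatrix $M_{L_\gamma-1}$. The ``only if'' direction is then immediate from part~2(b) of Theorem~\ref{gdrthm1}: if $\dcres^h(H_1,\ldots,H_n) = 0$, then $\dcres(F_1,\ldots,F_n) = 0$, which is the contrapositive of 2(b).

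For the converse direction, I would argue as in the proof of Proposition~\ref{dRes0}. Pass to $\bbK(X)$, the quotient field of $\bbK\{X\}$, and note that the entries of the last column of $M(L_\gamma)$ — the derivatives $\partial^{N-o_i-\gamma-k}(x_i - a_i)$ — are $\bbK$-linearly independent derivatives of the $x_i$, hence the last column is linearly independent over $\bbK(X)$ from the first $L_\gamma - 1$ columns. Consequently $\det(M(L_\gamma)) \neq 0$ if and only if $\rank(M_{L_\gamma-1}) = L_\gamma - 1$, and by part~2(a) of Theorem~\ref{gdrthm1} the latter is equivalent to $\dcres^h(H_1,\ldots,H_n) \neq 0$. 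Chaining these equivalences gives $\dcres(F_1,\ldots,F_n) \neq 0 \iff \dcres^h(H_1,\ldots,H_n) \neq 0$, whose negation is the claim. The only point requiring a little care — the ``main obstacle'', such as it is — is verifying that the column/row indexing used to build $M(L_\gamma)$ from $\ps_\gamma(F_1,\ldots,F_n)$ really does place the $X$-monomial column last and makes $M(L_\gamma^h)$ sit inside $M_{L_\gamma-1}$ exactly as in the $\gamma = 0$ situation; this is a bookkeeping check against the definitions of $l(i,k)$, $l^h(i,k)$, $\cV$ and $\cV^h$ given above, and involves no new idea.
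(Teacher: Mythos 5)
Your proposal is correct and follows essentially the same route as the paper: the paper's proof likewise combines Theorem~\ref{gdrthm1}, part 2(a), with the observation that the argument of Proposition~\ref{dRes0} (the last column of the resultant matrix being linearly independent over $\bbK(X)$ from the first $L_{\gamma}-1$ columns) carries over verbatim to $M(L_{\gamma})$. Your extra appeal to part 2(b) for one direction is harmless but redundant, since the rank equivalence already gives both implications.
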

\begin{proof}
By Theorem ~\ref{gdrthm1}, 2(a), $\dcres (F_1,\ldots ,F_n)\neq 0$ if and
only if it holds $\rank( M_{L_{\gamma}-1})=L_{\gamma}-1$. The proof is
analogous to the proof of Proposition ~\ref{dRes0}.
\end{proof}

\begin{ex}
Let $\bbK=\bbQ(t)$, $\partial =\frac{\partial}{\partial t}$ and
consider the set of differential polynomials in
$\bbK\{x_1,x_2,x_3\}\{u_1,u_2\}$,
\begin{align*}
F_1 (X,U)&= x_1-t-u_1-u_2-2u_{22}\\
F_2 (X,U)&= x_2-t^2-2u_1-u_2-u_{22}\\
F_3 (X,U)&= x_3-5-u_1-3u_{11}-u_{21}-u_{23}.
\end{align*}
Then the set $\ps(F_1,F_2,F_3)$ contains $L=17$ differential
polynomials and $\dres (F_1,F_2,F_3)=\det(M(L))=0$ because the
columns of $M(L)$ corresponding to the coefficients of $u_{17}$
and $u_{16}$ are columns of zeros.

We have $\gamma_1=2$ and $\gamma_2=0$ so $\gamma=2$. Let us compute
$\dcres (F_1,F_2,F_3)$. The set
$\ps_{\gamma}(F_1,F_2,F_3)$ contains $L_{\gamma}=11$ differential
polynomials in the differential variables 
$\cV=\{u_{25}, u_{24},u_{23}, u_{13}, u_{22}, u_{12}, u_{21}, u_{11}, u_2, u_1\}$
written in decreasing order w.r.t. the ranking on $U$.
\[ M(L_{\gamma})=  \left[
{\begin{array}{rrrrrrrrrrc}
-2 & 0 & -1 & -1 & 0 & 0 & 0 & 0 & 0 & 0 & x_{13} \\
0 & -2 & 0 & 0 & -1 & -1 & 0 & 0 & 0 & 0 & x_{12} \\
0 & 0 & -2 & 0 & 0 & 0 & -1 & -1 & 0 & 0 & x_{11} - 1
 \\
0 & 0 & 0 & 0 & -2 & 0 & 0 & 0 & -1 & -1 & x_1 - t \\
-1 & 0 & -1 & -2 & 0 & 0 & 0 & 0 & 0 & 0 & x_{23} \\
0 & -1 & 0 & 0 & -1 & -2 & 0 & 0 & 0 & 0 & x_{22} - 2
 \\
0 & 0 & -1 & 0 & 0 & 0 & -1 & -2 & 0 & 0 & x_{21}- 2
\,t \\
0 & 0 & 0 & 0 & -1 & 0 & 0 & 0 & -1 & -2 & x_2 - t^{2} \\
-1 & 0 & -1 & -3 & 0 & -1 & 0 & 0 & 0 & 0 & x_{32} \\
0 & -1 & 0 & 0 & -1 & -3 & 0 & -1 & 0 & 0 & x_{31} \\
0 & 0 & -1 & 0 & 0 & 0 & -1 & -3 & 0 & -1 & x_3 - 5
\end{array}}
 \right]
\]

The matrix $M(L_{\gamma}^h)$ is obtained by removing rows $1$,
$5$, $9$ and columns $1$, $4$, $11$ of $M(L_{\gamma})$. Then,
\begin{align*}
\dcres
(F_1,F_2,F_3)=&4x_3-8-8x_{21}+12t+12x_{32}+4x_{13}+4x_{11}-20x_{23}\\
&-8x_{22}+4x_{12}
+4x_1-4x_2+4t^2\\
\dcres^h (H_1,H_2,H_3)=&-4.
\end{align*}
\end{ex}

\section{Implicitization of linear DPPEs by linear Complete Differential Resultants}\label{sec-Implicitization of linear DPPEs}

Let $\cP(X,U)$, $F_i$, $H_i$ be as in Section \ref{sec-Basic
notions and notation} and let $\id$ be the implicit ideal of
$\cP(X,U)$. In this section, we prove the main result of this
paper, namely if $\dcres^h (H_1,\ldots
,H_n)\neq 0$ then $\dcres (F_1,\ldots ,F_n)(X)$ is the
implicit equation of $\cP(X,U)$ and hence the results in Section ~\ref{sec-Implicitization of linear DPPEs I} are extended.

Let $\gamma=\gamma (F_1,\ldots ,F_n)=\gamma (H_1,\ldots ,H_n)$ and
$\cV$ as in ~\eqref{eq-V}. Let
$\ps_{\gamma}=\ps_{\gamma}(F_1,\ldots,F_n)$, $\bbD=\bbK\{X\}$ and
let $(\ps_{\gamma})$ be the ideal generated by $\ps_{\gamma}$ in $\bbK[x_i,\ldots ,x_{i
N-o_i-\gamma} ,u_j,\ldots ,u_{j N-\gamma}\mid i=1,\ldots n,
j=1,\ldots n-1 ]$. Let $\cR^\star$ be
the ranking on $X\cup U$ defined in Section
~\ref{sec-Implicitization of linear DPPEs I}.
\begin{lem}\label{B0gamma}
Let $\cG$ be the reduced Groebner basis of $(\ps_{\gamma})$ with respect to
the ranking $\cR^\star$.
\begin{enumerate}
\item $\cG=\{B_0,B_1,\ldots ,B_{L_{\gamma}-1}\}$ where
$B_0<B_1<\cdots <B_{L_{\gamma}-1}$ with respect to the ranking
$\cR^\star$ and $\cG_0=\cG\cap \bbK\{X\}$ is not empty.

\item Let $E(L_{\gamma})$ be the $L_{\gamma}\times L_{\gamma}$
matrix whose $k$-th row contains the coefficients of
$B_{L_{\gamma}-k}$, $k=1,\ldots ,L_{\gamma }$ as a polynomial in $\bbD[\cV]$,
and where the coefficients are written in decreasing order with respect to the orderly ranking on
$U$. Given the
differential resultant matrix $M(L_{\gamma})$ of $F_1,\ldots F_n$,
then $\det (M(L_{\gamma}))=(-1)^a\det E (L_{\gamma})$ for some
$a\in \bbN$.

\item The cardinality of $\cG_0$ is one if and only if $\dcres^h (H_1,\ldots ,H_n)\neq
0$.

\item If $\dcres^h (H_1,\ldots ,H_n)\neq 0$ then $\cA
=\{B_0,B_1,\ldots ,B_{n-1}\}$ is a characteristic set of
$[\ps_{\gamma}]$ and $\cA_0=\cA\cup \bbK\{X\}=\{B_0\}$.
\end{enumerate}
\end{lem}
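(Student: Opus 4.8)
The plan is to mirror, step by step, the argument already carried out for Lemma~\ref{B0}, but now using the complete differential resultant matrix $M(L_\gamma)$ in place of $M(L)$. First I would set up the analogue of the big coefficient matrix: let $M_{2L_\gamma}$ be the $L_\gamma\times (2L_\gamma)$ matrix whose $k$-th row contains the coefficients of the $k$-th polynomial of $\ps_\gamma$, written in decreasing order with respect to $\cR^\star$, so that its left block is the $L_\gamma\times(L_\gamma-1)$ matrix $M_{L_\gamma-1}$ from Theorem~\ref{gdrthm1} augmented by the last column of $M(L_\gamma)$ (the $\bbK\{X\}$-entries $x_i-a_i$ and their derivatives), and its right block is the identity-times-$a_i$ block exactly as in the proof of Lemma~\ref{B0}. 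Performing Gaussian elimination on the rows of $M_{2L_\gamma}$ produces the reduced row echelon form $E_{2L_\gamma}$, whose nonzero rows are precisely the reduced Groebner basis $\cG$ of $(\ps_\gamma)$ with respect to $\cR^\star$ (same reference \cite{Cox}, Section 7, Exercise 10).

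For item~1: $M_{2L_\gamma}$ has rank $L_\gamma$ because its right block is invertible, so $\cG$ has exactly $L_\gamma$ elements $B_0<\cdots<B_{L_\gamma-1}$; and since the submatrix $M_{L_\gamma-1}$ formed by the first $L_\gamma-1$ columns has rank at most $L_\gamma-1$, at least one echelon row has all its leading entries beyond column $L_\gamma-1$, i.e.\ lies in $\bbK\{X\}$, so $\cG_0=\cG\cap\bbK\{X\}$ is nonempty. For item~2: $E(L_\gamma)$ is obtained from $M(L_\gamma)$ by the very same sequence of row operations used to pass from $M_{2L_\gamma}$ to $E_{2L_\gamma}$ (these touch only rows, and the columns of $M(L_\gamma)$ sit inside those of $M_{2L_\gamma}$ together with the extra identity columns which only rescale), so the determinants agree up to sign, $\det(M(L_\gamma))=(-1)^a\det(E(L_\gamma))$. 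For item~3: the cardinality of $\cG_0$ equals $L_\gamma-\rank(M_{L_\gamma-1})$, hence it is one precisely when $\rank(M_{L_\gamma-1})=L_\gamma-1$, which by Theorem~\ref{gdrthm1}, 2(a), is equivalent to $\dcres^h(H_1,\ldots,H_n)\neq 0$.

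Item~4 is the new part and the one requiring the most care. Assuming $\dcres^h(H_1,\ldots,H_n)\neq 0$, by item~3 we have $\cG_0=\{B_0\}$, so running Algorithm~\ref{alg-characteristic set} on $\ps_\gamma$ yields a characteristic set $\cA$ of $[\ps_\gamma]$ with $\cA_0=\cA\cap\bbK\{X\}=\{B_0\}$; what must be shown is that the $\cA$ produced has the explicit form $\{B_0,B_1,\ldots,B_{n-1}\}$. The point is that, thanks to the block/echelon structure and the orderings $\cR^\star$ and the orderly ranking on $U$, the leads of consecutive $B_i$'s jump exactly $n$ times: the $L_\gamma$ polynomials of $\ps_\gamma$ are grouped into $n$ chains $\partial^{N-o_i-\gamma}f_i,\ldots,f_i$, and after elimination the distinct leads correspond to the $n$ ``top'' derivatives $x_{i,N-o_i-\gamma}$, $i=1,\ldots,n$ (in $\cR^\star$ the $X$-variables are the largest), while the remaining $B_i$ for $i\ge n$ have leads in $U$ and are therefore absorbed as reductions rather than appended. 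So the \texttt{if lead$(B_i)\neq$ lead$(B_{i-1})$} test in the algorithm fires exactly for $i=0,1,\ldots,n-1$, giving $\cA=\{B_0,\mbox{\prem}(B_1,\cA),\ldots,\mbox{\prem}(B_{n-1},\cA)\}$; since $\cG$ is already reduced, each $B_i$ is partially reduced with respect to the lower ones and the pseudo-remainders change nothing essential, so $\cA=\{B_0,B_1,\ldots,B_{n-1}\}$ up to the usual normalization. The main obstacle is precisely this combinatorial lead-counting argument — verifying that the eliminated variables $X$ contribute exactly $n$ new leads and that all surviving Groebner basis elements with a $U$-lead get correctly folded in by pseudo-reduction rather than producing spurious chain elements; everything else is a routine transcription of the $\gamma=0$ case treated in Lemma~\ref{B0} and Theorem~\ref{implicit}.
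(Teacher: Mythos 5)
Your treatment of items 1--3 is correct and follows the paper's own route: the paper simply declares these parts ``analogous to the proof of Lemma~\ref{B0}'', and your transcription via $M_{2L_\gamma}$, its reduced echelon form, and the count $\mid\cG_0\mid=L_\gamma-\rank(M_{L_\gamma-1})$ combined with Theorem~\ref{gdrthm1}, 2(a), is exactly that analogy spelled out.

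Item 4, which you correctly single out as the delicate part, contains a genuine error in the mechanism you describe. You assert that under $\cR^\star$ the $X$-variables are the largest and that the distinct leads of the $B_i$ are the top derivatives $x_{i,N-o_i-\gamma}$. This is backwards: $\cR^\star$ eliminates $U$ with respect to $X$, i.e.\ $\partial^k x<\partial^{k^\star}u$, so every $U$-derivative dominates every $X$-derivative; that is precisely why the unique smallest element $B_0$ lies in $\bbK\{X\}$ while all of $B_1,\ldots,B_{L_\gamma-1}$ have leads in $\{U\}$. Consequently your claim that the test $\lead(B_i)\neq\lead(B_{i-1})$ ``fires exactly for $i=0,\ldots,n-1$'' is false: when $\dcres^h(H_1,\ldots,H_n)\neq 0$, all $L_\gamma-1$ columns of $M_{L_\gamma-1}$ are pivot columns, so the $B_i$ have pairwise distinct leads and the test fires at every step. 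What truncates $\cA$ after $n$ elements is not the test but the pseudo-remainders, and this is where the paper's short argument does the work: $E(L_\gamma)$ is upper triangular with its $(L_\gamma-1)$-principal submatrix equal to the identity, so each $B_i$ with $i\geq 1$ has the form $v-C_i(X)$ for a single derivative $v\in\cV$; for $i=1,\ldots,n-1$ that lead is $u_i$ itself, whereas for $i\geq n$ it is a proper derivative $u_{jk}$, $k\geq 1$, of $\lead(B_j)=u_j$, so $\prem(B_i,\cA)$ reduces via $\partial^k B_j$ to an element of $\id=[B_0]$ and hence to $0$, contributing nothing to the chain. As written, your lead-counting argument would not produce the correct characteristic set and in particular would assign $X$-leads to the components $B_1,\ldots,B_{n-1}$, contradicting Proposition~\ref{prop-proper}, where these are the inversion maps $B_j=u_j-U_j(X)$ with leaders $u_j$.
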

\begin{proof}
The proof of 1,2 and 3 is analogous to the proof of Lemma
~\ref{B0}.

If $\dcres^h (H_1,\ldots ,H_n)\neq 0$ then $E(L_{\gamma})$
is an upper triangular matrix of rank $L_{\gamma}$ whose
$L_{\gamma}-1$ principal submatrix is the identity. Following
Algorithm ~\ref{alg-characteristic set} we obtain the characteristic set $\cA
=\{B_0,B_1,\ldots ,B_{n-1}\}$ of $[\ps_{\gamma}]$. Since
$\cG_0=\cG\cap\bbK\{X\}=\{B_0\}$ then $\cA_0 =\{B_0\}$.
\end{proof}

The next theorem gives an explicit formula of $\dcres
(F_1,\ldots ,F_n)$ in terms of $\dcres^h (H_1,\ldots
,H_n)$. Given $i\in\{1,\ldots ,n\}$, let $S_{{\gamma}i}$ be the
matrix defined in Section ~\ref{sec-Linear gamma-Differential Resultants}.
\begin{thm}\label{dresident}
\begin{enumerate}
\item There exists $P\in \id$ and $k\in\{1,\ldots ,n\}$ such that
\begin{enumerate}
\item $\ord(P,x_k)=N-o_k-\gamma$

\item $\ord(P,x_{i})\leq N-o_i-\gamma$, $i=1,\ldots,n$.
\end{enumerate}

\item For any differential polynomial $P$ as in statement 1 it holds
\begin{displaymath}
\dcres (F_1,\ldots ,F_n)= \frac{1}{\alpha} \det(S_{_{\gamma}k}) \dcres^h
(H_1,\ldots ,H_n) P(X).
\end{displaymath}
with $\alpha=(-1)^a\frac{\partial P}{\partial x_{k\;N-o_k-\gamma}}$,
$a\in\bbN$.
\end{enumerate}
\end{thm}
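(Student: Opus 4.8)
The plan is to bootstrap from the Groebner basis picture of Lemma~\ref{B0gamma}. First I would take $P:=B_0$, the unique element of $\cG_0=\cG\cap\bbK\{X\}$ produced by the reduced Groebner basis $\cG$ of $(\ps_\gamma)$ with respect to $\cR^\star$; since $P$ generates $\id$ (it is a characteristic polynomial, by Lemma~\ref{B0gamma}, 4, combined with the argument in Theorem~\ref{implicit}), we have $P\in\id$. For statement 1, I need to pin down the lead of $P$. Because $\cR^\star$ eliminates $U$ with respect to $X$, and the ideal $(\ps_\gamma)$ lives in the polynomial ring in the variables $x_i,\ldots,x_{i\,N-o_i-\gamma}$ and $u_j,\ldots,u_{j\,N-\gamma}$, the variables appearing in $P$ are among the $x_{ik}$ with $k\le N-o_i-\gamma$; this gives 1(b) immediately. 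For 1(a), the point is that $P\ne 0$ and $\lead(P)$ with respect to $\cR^\star$ must be some $x_{k\ell}$; one shows $\ell=N-o_k-\gamma$ for the appropriate $k$ by tracking, through the Gaussian elimination on $M(L_\gamma)$ described in Lemma~\ref{B0gamma}, which $x$-column survives as a pivot — equivalently, that $P$ is, up to a scalar in $\bbK$, the polynomial whose $X$-coefficient vector is the cofactor expansion of $\det(M(L_\gamma))$ along its last column.

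For statement 2, the key identity is a cofactor/Laplace expansion of $\det(M(L_\gamma))$ along its last column (the column carrying the $\bbD=\bbK\{X\}$-entries $x_i,\ldots$). Writing $M(L_\gamma)=[\,M_{L_\gamma-1}\mid v\,]$ where $v$ is that last column, we get
\begin{displaymath}
\dcres(F_1,\ldots,F_n)=\det(M(L_\gamma))=\sum_{r=1}^{L_\gamma}(-1)^{r+L_\gamma}v_r\,\det\bigl(M_{L_\gamma-1}^{(r)}\bigr),
\end{displaymath}
where $M_{L_\gamma-1}^{(r)}$ is $M_{L_\gamma-1}$ with row $r$ deleted and $v_r$ is the corresponding derivative $\partial^{\,\bullet}(x_i-a_i)$. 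I would then argue that this sum equals $c\,P(X)$ for a constant $c\in\bbK$: indeed the right side is an element of $\id$ lying in the $\bbK$-span of $\{x_{ik}\mid k\le N-o_i-\gamma\}$ shifted by a constant, it is nonzero (since $\dcres^h(H_1,\ldots,H_n)\ne0$ forces $\rank(M_{L_\gamma-1})=L_\gamma-1$ by Theorem~\ref{gdrthm1}, 2(a), so the last column is independent of the others over $\bbK(X)$), and $P$ is a characteristic polynomial, so the two agree up to a $\bbK$-multiple. To evaluate $c$, I would compare the coefficient of the top $x$-derivative on both sides: the coefficient of $x_{k\,N-o_k-\gamma}$ in $\dcres(F_1,\ldots,F_n)$ is, by the cofactor expansion, $\pm\det$ of the submatrix of $M_{L_\gamma-1}$ obtained by deleting the row indexed by $\partial^{N-o_k-\gamma}F_k$; by Remark~\ref{rem-S_dreshgamma} and the block structure this determinant factors as $\pm\det(S_{\gamma k})\,\dcres^h(H_1,\ldots,H_n)$ (exactly the identity $\det(M_{L_\gamma-1}^k)=(-1)^a\dcres^h(H_1,\ldots,H_n)\det(S_{\gamma k})$ proved inside Theorem~\ref{gdrthm1}). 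On the other side the coefficient of $x_{k\,N-o_k-\gamma}$ in $c\,P(X)$ is $c\,\frac{\partial P}{\partial x_{k\,N-o_k-\gamma}}$. Solving for $c$ and substituting back yields
\begin{displaymath}
\dcres(F_1,\ldots,F_n)=\frac{1}{\alpha}\det(S_{\gamma k})\,\dcres^h(H_1,\ldots,H_n)\,P(X),\qquad \alpha=(-1)^a\frac{\partial P}{\partial x_{k\,N-o_k-\gamma}},
\end{displaymath}
which is the claim.

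The main obstacle I anticipate is the bookkeeping in statement 1(a): one must be sure that, among all $k$, there is (at least) one for which the top surviving $x$-variable is \emph{exactly} $x_{k\,N-o_k-\gamma}$ rather than something of lower order — i.e. that the elimination does not collapse that pivot. This is where $\dcres^h(H_1,\ldots,H_n)\ne0$ is essential: it guarantees, via $\det(S_{\gamma k})\ne0$ for some $k$ (Theorem~\ref{gdrthm1}, 2(a), and Remark~\ref{rem-S_dreshgamma}), that the column of $M(L_\gamma)$ carrying $x_{k\,N-o_k-\gamma}$ contributes nontrivially, so that $P$ genuinely has order $N-o_k-\gamma$ in $x_k$; without this the characteristic polynomial could have strictly smaller order and the stated formula would degenerate. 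The rest — the Laplace expansion and the coefficient comparison — is linear algebra that mirrors the computations already carried out in the proofs of Proposition~\ref{dRes0} and Theorem~\ref{gdrthm1}, so I expect it to go through routinely once the order statement is secured.
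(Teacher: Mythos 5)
There is a genuine gap in your treatment of statement 1. You take $P:=B_0$, the minimal element of the reduced Groebner basis, and try to read off 1(a) from its lead. Two problems. First, the lead of $B_0$ under $\cR^\star$ is the highest-ranked $x$-derivative present, and since the ranking on $X$ satisfies $x<x^\star\Rightarrow \partial^k x<\partial^{k^\star}x^\star$ for \emph{all} $k,k^\star$, that lead is determined by which indeterminate $x_j$ of smallest index occurs in $B_0$, not by any order count; so ``tracking which $x$-column survives as a pivot'' cannot by itself yield $\ord(B_0,x_k)=N-o_k-\gamma$. Second, and more seriously, the claim is false without the hypothesis $\dcres^h (H_1,\ldots ,H_n)\neq 0$, which statement 1 does not assume: for $F_1=x_1-u_{11}$, $F_2=x_2-u_{11}$ one has $N=2$, $\gamma=0$, $N-o_i-\gamma=1$, while $B_0=x_1-x_2$ has order $0$ in both $x_1$ and $x_2$. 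The paper's proof avoids both issues with one move you are missing: take any $B\in\cG_0$, set $\beta=\min\{N-o_i-\gamma-\ord(B,x_i)\mid i=1,\ldots ,n\}$ and put $P=\partial^{\beta}B$; since $\id$ is a differential ideal, $P\in\id$ and 1(a) and 1(b) hold by construction, unconditionally. (When $\dcres^h (H_1,\ldots ,H_n)\neq 0$ your choice does work, i.e. $\beta=0$, but the correct reason is that the row of $\partial^{N-o_k-\gamma}F_k$ must enter the left-kernel combination of $M_{L_{\gamma}-1}$ with nonzero coefficient for some $k$ --- otherwise the kernel vector would restrict to a left-kernel vector of the nonsingular $M(L_{\gamma}^h)$ --- not the column argument you sketch.)

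For statement 2 your route is genuinely different from the paper's and essentially viable. The paper writes $P=\cF_1(F_1)+\cdots +\cF_n(F_n)$ with $\cF_i\in\bbK[\partial]$, converts this into row operations on $M(L_{\gamma})$ producing a block-triangular matrix with $P(X)$, $S_{\gamma k}$ and $M(L_{\gamma}^h)$ on the diagonal, and reads off the identity; you instead expand $\det(M(L_{\gamma}))$ along the last column and compare coefficients of $x_{k\,N-o_k-\gamma}$, reusing the identity $\det(M_{L_{\gamma}-1}^k)=(-1)^a\dcres^h (H_1,\ldots ,H_n)\det(S_{\gamma k})$ from Theorem~\ref{gdrthm1}. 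The step you leave under-argued is that the expansion equals $c\,P(X)$ for some $c\in\bbK$: membership of both sides in $\id$ is not enough (e.g. $B_0$ and $\partial B_0$ both lie in $\id$ but are not proportional). What is needed is that every linear element of $\id$ whose order in each $x_i$ is at most $N-o_i-\gamma$ is a $\bbK$-multiple of $B_0$; this follows from the fact that $\cG_0=\{B_0\}$ together with the observation that applying any $\cD\in\bbK[\partial]$ of positive degree to $B_0$ pushes $\ord(\cdot,x_k)$ above $N-o_k-\gamma$. With that inserted, and the trivial remark that both sides of the formula vanish when $\dcres^h (H_1,\ldots ,H_n)=0$, your argument for statement 2 closes.
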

\begin{proof}
Given the reduced Groebner basis $\cG$ of $(\ps_{\gamma})$ w.r.t. $\cR^\star$
let us take a polynomial $B\in \cG_0=\cG\cap\bbK\{X\}$. Let
$\beta=\min\{N-o_i-\gamma-\ord (B,x_i)\mid i=1,\ldots ,n\} =N-o_k-\gamma-\ord
(B,x_k)$ for some $k\in\{1,\ldots ,n\}$. Then $P=\partial^{\beta}
B$ verifies the conditions in statement 1.

Since $P\in \id=[\ps_{\gamma}]\cap \bbK\{X\}$ there exist $\cF_1,\ldots
,\cF_n\in \bbK[\partial]$ such that $P(X)=\cF_1(F_1(X,U))+\cdots
+\cF_n(F_n(X,U))$. Then $P(X)=\cF_1(T_1(X))+\cdots +\cF_n(T_n(X))$
and $\cF_1(H_1(U))+\cdots +\cF_n(H_n(U))=0$. As a consequence, we
can perform row operations on $M(L_{\gamma})$ to obtain a matrix of the
kind
\begin{displaymath}
\left[\begin{array}{ccc}0\ldots 0& 0\ldots 0 &P(X)\\
S_{\gamma k}&\begin{array}{c}*\ldots *\\ \ddots \\ *\ldots * \end{array}&\begin{array}{c}*\\ \vdots \\ * \end{array}\\
\begin{array}{c}0\ldots 0\\ \ddots \\0\ldots 0 \end{array} & M(L_{\gamma}^h) & \begin{array}{c}*\\ \vdots \\ *\end{array}\end{array}\right].
\end{displaymath}
To be more precise, we reorder the rows of $M(L_{\gamma})$ so that the
coefficients of $\partial^{N-o_k-\gamma} F_k$ are in the first row, rows
$2$ to $n$ are the rows containing the entries of the matrix
$S_{\gamma k}$ and rows $n+1$ to $L_{\gamma}$ are the rows containing the
entries of $M(L_{\gamma}^h)$. Then multiply the first row of the
obtained matrix by $\frac{\partial P}{\partial x_{k\;N-o_k-\gamma}}$.
Finally, replace the first row by the coefficients of $P(X)$ as a
polynomial in $\bbD\{U\}$ written in decreasing order w.r.t. the
ranking on $U$, that is, all zeros and the last entry equal to
$P(X)$.

Let $\alpha=(-1)^a\frac{\partial P}{\partial x_{k\;N-o_k-\gamma}}$, for
some $a\in\bbN$ then
\begin{displaymath}
\alpha\det(M(L_{\gamma}))= \det(S_k)\dres_{\gamma}^h (H_1,\ldots ,H_n) P(X).
\end{displaymath}
\end{proof}

\begin{thm}\label{thm-gammaimp}
Given a system $\cP (X,U)$ of linear DPPEs with implicit ideal
$\id$. If $\dcres^h (H_1,\ldots ,H_n)\neq 0$ then $\id$ has
dimension $n-1$ and
\begin{displaymath}
\dcres (F_1,\ldots ,F_n)(X)=0
\end{displaymath}
is its implicit equation.
\end{thm}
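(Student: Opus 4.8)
The strategy is to mirror the proof of Theorem~\ref{implicit} from Section~\ref{sec-Implicitization of linear DPPEs I}, but using the complete machinery developed in this section: Lemma~\ref{B0gamma} in place of Lemma~\ref{B0}, and Theorem~\ref{dresident} to pin down the relation between $\dcres(F_1,\ldots,F_n)$ and a characteristic polynomial of $\id$. First I would invoke Lemma~\ref{B0gamma}: assuming $\dcres^h(H_1,\ldots,H_n)\neq 0$, part~4 gives a characteristic set $\cA=\{B_0,B_1,\ldots,B_{n-1}\}$ of $[\ps_\gamma]$ with $\cA_0=\cA\cap\bbK\{X\}=\{B_0\}$. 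Since $\id=[\ps_\gamma]\cap\bbK\{X\}=[\cA_0]$ (this uses the analogue of \cite{Gao}, Lemma~3.2 and Theorem~3.1 for $\ps_\gamma$, exactly as in Section~\ref{sec-Implicitization of linear DPPEs I}; note $[\ps_\gamma]=[\ps]$ because $\ps_\gamma$ and $\ps$ generate the same differential ideal, being derivatives of the same $F_i$), the characteristic set $\cA_0$ has cardinality one, so $\id$ has dimension $n-1$ and $B_0$ is a characteristic polynomial of $\id$.

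It remains to identify $\dcres(F_1,\ldots,F_n)(X)$ with $B_0$ up to a nonzero factor in $\bbK$. Here I would apply Theorem~\ref{dresident}: there is $P\in\id$ and $k\in\{1,\ldots,n\}$ with $\ord(P,x_k)=N-o_k-\gamma$ and $\ord(P,x_i)\le N-o_i-\gamma$ for all $i$, and
\begin{displaymath}
\dcres(F_1,\ldots,F_n)=\frac{1}{\alpha}\det(S_{\gamma k})\,\dcres^h(H_1,\ldots,H_n)\,P(X),
\end{displaymath}
where $\alpha=(-1)^a\,\partial P/\partial x_{k\,N-o_k-\gamma}$. In the proof of Theorem~\ref{dresident} the polynomial $P$ is constructed as $\partial^\beta B_0$ for a suitable $\beta\ge 0$; since $\cG_0=\{B_0\}$, one can take $P$ built from $B_0$, so $P$ lies in $[B_0]=[\cA_0]=\id$ and, being a derivative of the characteristic polynomial $B_0$, $P$ itself is either $B_0$ (when $\beta=0$) or a derivative of it. I would argue that the order conditions on $P$ force $\beta=0$: a proper derivative $\partial B_0$ would strictly increase the order in every $x_i$ appearing in $B_0$, and by the characteristic-set property $B_0$ already realizes the minimal such order profile, so $P=cB_0$ for some $c\in\bbK$, or at worst $P$ and $B_0$ differ by a unit times a derivation power that the order bounds rule out. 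By Theorem~\ref{gdrthm1}, $\dcres^h(H_1,\ldots,H_n)\neq 0$ also gives $\det(S_{\gamma k})\neq 0$ for the relevant $k$, and $\alpha\neq 0$ since $P$ genuinely involves $x_{k\,N-o_k-\gamma}$. Hence $\dcres(F_1,\ldots,F_n)$ is a nonzero $\bbK$-multiple of $P$, and therefore a nonzero $\bbK$-multiple of the characteristic polynomial $B_0$; so $\dcres(F_1,\ldots,F_n)$ is itself a characteristic polynomial of $\id$ and $\dcres(F_1,\ldots,F_n)(X)=0$ is the implicit equation of $\cP(X,U)$.

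The main obstacle I anticipate is the bookkeeping needed to be certain that the $P$ extracted from Theorem~\ref{dresident} is (a scalar multiple of) $B_0$ rather than a strict derivative of it — equivalently, that no superfluous differentiation has been introduced when passing from the Groebner basis element to $P$. This should follow cleanly from the minimality in the definition of $\beta$ together with the fact that $B_0$ is the unique element of $\cG_0$ and reduced, but it must be stated carefully. Everything else — dimension count, membership $\dcres(F_1,\ldots,F_n)\in\id$, nonvanishing of the scalar factors — is routine given Lemma~\ref{B0gamma}, Theorem~\ref{dresident}, and Theorem~\ref{gdrthm1}.
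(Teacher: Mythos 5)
Your overall architecture is reasonable, and the first half (dimension $n-1$ via Lemma~\ref{B0gamma}(4)) coincides with the paper's. But the route you take for the identification $\dcres(F_1,\ldots,F_n)=cB_0$ --- going through Theorem~\ref{dresident} --- has a genuine gap at exactly the point you flag. The polynomial $P$ produced in the proof of Theorem~\ref{dresident} is $\partial^{\beta}B_0$ with $\beta=\min_i\{N-o_i-\gamma-\ord(B_0,x_i)\}$, so what you must show is that $\ord(B_0,x_k)$ \emph{attains} the maximal value $N-o_k-\gamma$ for some $k$, i.e.\ that $\beta=0$. Your justification --- that ``$B_0$ already realizes the minimal such order profile'' by the characteristic-set property --- points in the wrong direction: minimality of rank bounds the orders appearing in $B_0$ from \emph{above}, and is perfectly compatible with $\ord(B_0,x_i)<N-o_i-\gamma$ for every $i$. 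In that case $\beta>0$, $P=\partial^{\beta}B_0$ is a proper derivative of $B_0$, and your argument would only exhibit $\dcres(F_1,\ldots,F_n)$ as a constant times a proper derivative of the characteristic polynomial, which is not itself a characteristic polynomial. The fact $\beta=0$ is true but requires an argument, for instance: the row of the reduced echelon form giving $B_0$ comes from a linear combination of the rows of $M_{L_{\gamma}-1}$ that vanishes on all $U$-columns; the rows other than those of $\partial^{N-o_i-\gamma}F_i$ restrict to the nonsingular matrix $M(L_{\gamma}^h)$ (here is where $\dcres^h(H_1,\ldots,H_n)\neq 0$ enters), so any such combination must use some top row $\partial^{N-o_k-\gamma}F_k$ with nonzero coefficient, forcing $x_{k\,N-o_k-\gamma}$ to occur in $B_0$.

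For comparison, the paper's proof sidesteps this entirely: it uses Lemma~\ref{B0gamma}(2) together with the structure of $E(L_{\gamma})$ established in part (4) --- upper triangular, with identity $(L_{\gamma}-1)$-principal submatrix and last row equal to the coefficient vector of $B_0$ --- so that $\det(E(L_{\gamma}))=B_0$ and hence $\dcres(F_1,\ldots,F_n)=(-1)^a\det(E(L_{\gamma}))=cB_0$ at once, with no appeal to Theorem~\ref{dresident}. Either switch to that argument or supply the missing proof that $\beta=0$ along the lines sketched above.
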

\begin{proof}
By Lemma ~\ref{B0gamma}(4), a characteristic set of $\id$ is
$\cA_0=\{B_0\}$. Then the dimension of $\id$ is $n-1$. By Lemma ~\ref{B0gamma}(2), and the definition of the linear complete
differential resultant,
\[\dcres (F_1,\ldots ,F_n)=\det(M(L_{\gamma}))= (-1)^a \det
(E(L_{\gamma})),\] for some $a\in\bbN$. Therefore $\dcres
(F_1,\ldots ,F_n)=c B_0$ with $c\in \bbK$. This proves that
$\dcres (F_1,\ldots ,F_n)$ is a characteristic polynomial
of $\id$ and therefore the implicit equation of $\cP (X,U)$ is
$\dcres (F_1,\ldots ,F_n)(X)=0$.
\end{proof}

\section{Some results on properness}\label{sec-Some results on properness}
Let $\cP(X,U)$, $H_i$, $P_i$ and $\cL_{ij}$ be as in Section
\ref{sec-Basic notions and notation}. In this section, we give
some results related with the inversion problem. To be more
precise, we study conditions on the differential operators
$\cL_{ij}$ so that $\cP(X,U)$ is a set of proper DPPEs.

\para

We gather below some definitions that will be needed in this
section and that were used in \cite{Gao}, Section 6, to study the
inversion problem in terms of characteristic sets. The {\sf image}
of $\cP(X,U)$ is the set
\begin{displaymath}
\im=\{(\eta_1,\ldots ,\eta_n)\in\bbE^n\mid \exists (\tau_1,\ldots
,\tau_{n-1})\in \bbE^{n-1}\mbox{ with }\eta_i=P_i(\tau_1,\ldots
,\tau_{n-1})\}.
\end{displaymath}
The inversion problem says: given $(\eta_1,\ldots ,\eta_n)\in\im$,
find $(\tau_1,\ldots ,\tau_{n-1})\in \bbE^{n-1}$ such that
$\eta_i=P_i(\tau_1,\ldots ,\tau_{n-1})$. We call {\sf inversion
maps} for $\cP(X,U)$ to a set of funtions $g_1,\ldots ,g_{n-1}$ in
$\{X\}$ such that
\begin{displaymath}
u_j=g_j (x_1,\ldots ,x_n),\; j=1,\ldots ,n-1.
\end{displaymath}
A set $\cP(X,U)$ of DPPEs is {\sf proper} if for a generic zero
$(\Gamma_1,\ldots ,\Gamma_n)$ (and hence most of the points) of
the implicit variety $\zero(\id)$, there exists only one
$(\tau_1,\ldots ,\tau_{n-1})\in\bbE^{n-1}$ such that
$\Gamma_i=P_i(\tau_1,\ldots ,\tau_{n-1})$. By \cite{Gao}, Lemma
3.1, $(P_1(U),\ldots ,P_n(U))$ is a generic zero of the
implicit ideal $\id$.

\begin{prop}\label{prop-proper}
Let as suppose that $\dcres^h (H_1,\ldots ,H_n)\neq 0$,
then the next statements hold.
\begin{enumerate}
\item The set $\cP (X,U)$ of DPPEs is proper.

\item Furthermore, there exist a set of linear inversion maps for
$\cP (X,U)$,
\begin{displaymath}
U_1,\ldots ,U_{n-1}\in\bbK\{X\}.
\end{displaymath}
\end{enumerate}
\end{prop}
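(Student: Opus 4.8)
The plan is to exploit the structure developed in Section~\ref{sec-Implicitization of linear DPPEs} under the hypothesis $\dcres^h(H_1,\ldots,H_n)\neq 0$, which by Lemma~\ref{B0gamma} gives a very concrete description of the reduced Groebner basis $\cG=\{B_0,B_1,\ldots,B_{L_\gamma-1}\}$ of $(\ps_\gamma)$ with respect to $\cR^\star$: the associated matrix $E(L_\gamma)$ is upper triangular with identity principal $(L_\gamma-1)$-submatrix, so for $i=1,\ldots,L_\gamma-1$ the polynomial $B_i$ has a linear leading term in a single variable of $U$ and is in fact solved for that variable. First I would unwind exactly which variables of $\cV$ occur as the leads of $B_1,\ldots,B_{n-1}$; because $\cA=\{B_0,\ldots,B_{n-1}\}$ is the characteristic set of $[\ps_\gamma]$ (Lemma~\ref{B0gamma}(4)) and the ideal has differential dimension $n-1$ with $\cA_0=\{B_0\}$, the leads of $B_1,\ldots,B_{n-1}$ must involve (derivatives of) $u_1,\ldots,u_{n-1}$, one "lowest" lead per parameter. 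Each such $B_j$, being linear and echelon-reduced, has the form $\partial^{m_j} u_{\sigma(j)} = (\text{$\bbK$-linear combination of lower derivatives of }U\text{ and of terms in }\{X\})$.

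Next I would perform back-substitution to eliminate the auxiliary occurrences of $U$ on the right-hand sides. Starting from the $B_j$ whose lead is lowest in the ranking and working upward, I substitute the already-obtained expressions; since at each stage the variable being isolated is strictly higher in $\cR^\star$ than everything appearing on the right, and all operations are $\bbK$-linear differential operators, after finitely many steps I obtain, for each $j\in\{1,\ldots,n-1\}$, an identity $\partial^{m_j} u_j = G_j(X)$ with $G_j\in\bbK\{X\}$ linear. To pass from a relation on a derivative of $u_j$ to a relation on $u_j$ itself, I would note that the characteristic set structure forces $m_j=0$: indeed if some derivative $\partial^{m_j}u_j$ with $m_j\ge 1$ were the lead of $B_j$ while $u_j$ itself did not appear as a lead of any element of $\cA$, then $u_j$ would be a parametric derivative and the differential dimension would exceed $n-1$, contradicting Lemma~\ref{B0gamma}(4); alternatively this follows because $\ps_\gamma$ is complete, so no parameter $u_j$ is "missing" at order $0$. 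Hence each $B_j$ is, up to a nonzero constant in $\bbK$, of the form $u_j - U_j(X)$ with $U_j\in\bbK\{X\}$ linear, and these are the claimed inversion maps.

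For statement~1, properness, I would argue as follows. By \cite{Gao}, Lemma~3.1, the tuple $(P_1(U),\ldots,P_n(U))$ is a generic zero of $\id$, so its coordinates satisfy $B_j(P_1(U),\ldots,P_n(U))=0$ for all $j$; substituting the inversion-map identity $B_j = c_j(u_j - U_j(X))$ with $c_j\in\bbK\setminus\{0\}$ yields $u_j = U_j(P_1(U),\ldots,P_n(U))$ in $\bbE$. Thus if $(\Gamma_1,\ldots,\Gamma_n)$ is any generic zero of $\id$ and $(\tau_1,\ldots,\tau_{n-1})\in\bbE^{n-1}$ satisfies $\Gamma_i=P_i(\tau)$ for all $i$, then $(\Gamma_1,\ldots,\Gamma_n)\in\im$ and, applying the same identities (which hold in the differential ideal $\id$, hence on every point of $\zero(\id)$), $\tau_j=U_j(\Gamma_1,\ldots,\Gamma_n)$ is uniquely determined. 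This is exactly the definition of $\cP(X,U)$ being proper, and the $U_j$ are the linear inversion maps of statement~2, so the two statements are proved together.

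The main obstacle I anticipate is the bookkeeping in the back-substitution step: making sure that the echelon/triangular structure of $E(L_\gamma)$ really lets one isolate \emph{each} $u_j$ (at order zero) rather than merely some derivative of it, and controlling which variable is the lead of each $B_j$. This is where the completeness of $\ps_\gamma$ and the precise count in \eqref{eq-V} (the variable sets $\cV$, $\cV^h$ and the dimension formula $\sum_j(N-\gamma_j-\gamma+1)=L_\gamma-1$) must be invoked carefully; once the leads are pinned down, the elimination is routine linear differential algebra, and the properness conclusion is then essentially immediate from \cite{Gao}, Lemma~3.1.
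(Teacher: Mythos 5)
Your proposal is correct in substance and follows the paper's strategy --- both derive from Lemma~\ref{B0gamma} that the characteristic set of $[\ps_{\gamma}]$ is $\{B_0,\,u_1-U_1(X),\ldots,u_{n-1}-U_{n-1}(X)\}$ and read off properness and the inversion maps from it --- but it differs in two places worth noting. First, where the paper simply cites \cite{Gao}, Theorem 6.1, to pass from this characteristic set to properness, you give a direct argument: $B_j\in[\ps_{\gamma}]$ forces the identity $u_j=U_j(P_1(U),\ldots,P_n(U))$ in $\bbK\{U\}$, so any preimage $\tau$ of a point of the implicit variety satisfies $\tau_j=U_j(\Gamma_1,\ldots,\Gamma_n)$ and is therefore unique. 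That is a valid, self-contained replacement for the citation. Second, your route to $B_j=u_j-U_j(X)$ is more laborious than necessary, and the auxiliary argument that the lead of $B_j$ is $u_j$ itself rather than some $\partial^{m_j}u_j$ is the weak point: the dimension count $n-1$ comes from having $n$ elements in a characteristic set over $2n-1$ differential indeterminates, and it is insensitive to whether the leaders are the $u_j$ or higher derivatives of the $u_j$, so it does not by itself force $m_j=0$. The clean reason --- which also makes the back-substitution step unnecessary --- is already implicit in your first paragraph: since the $(L_{\gamma}-1)$-principal submatrix of $E(L_{\gamma})$ is the \emph{identity} (not merely upper triangular), each $B_i$ with $1\le i\le L_{\gamma}-1$ contains exactly one variable of $\cV$, namely the $i$-th smallest in the orderly ranking; and the $n-1$ smallest elements of $\cV$ are precisely $u_1<\cdots<u_{n-1}$ at order zero, since $\cV$ contains every $u_j$ itself. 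Hence $B_j=u_j-U_j(X)$ with $U_j\in\bbK\{X\}$ linear, with no elimination step and no appeal to dimension.
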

\begin{proof}
By Lemma ~\ref{B0gamma} then $E (L_{\gamma})$ is an upper
triangular matrix of rank $L_{\gamma}$ whose $L_{\gamma}-1$
principal submatrix is the identity. Then, $\cA =\{B_0,B_1,\ldots
,B_{n-1}\}$ is a characteristic set of $[\ps_{\gamma}]$ where
\begin{displaymath}
B_j(X,U)=u_j-U_j(X),\; j=1,\ldots ,n-1,
\end{displaymath}
for linear differential polynomials $U_j\in\bbK\{X\}$. By
\cite{Gao}, Theorem 6.1, the set $\cP(X,U)$ of DPPEs is proper and
$U_1,\ldots ,U_{n-1}$ is a set of inversion maps of $\cP(X,U)$.
\end{proof}

If $\bbK$ is not a field of constants with respect to $\partial$,
then $\bbK [\partial]$ is not commutative but
\[\partial k-k\partial=\partial (k)\]
for all $k\in \bbK$. The ring $\bbK [\partial]$ of differential
operators with coefficients in $\bbK$ is right euclidean (and also
left euclidean). Given $\cL,\cL'\in\bbK [\partial]$, by applying
the right division algorithm we obtain $q,r\in\bbK
[\partial]$, the right quotient and the right reminder of $\cL$
and $\cL'$ respectively, such that $\cL=q \cL'+r$ where $\deg
(r)<\deg(q)$.

If $\cL_j\in\bbK[\partial]$ is the {\sf greatest common right
divisor} of $\cL_{1j},\ldots ,\cL_{nj}$ then there exists
$\cL'_{ij}\in\bbK[\partial]$ such that $\cL_{ij}=\cL'_{ij}\cL_j$,
$i=1,\ldots ,n$. Then we write
\begin{displaymath}
\cL_j=\gcrd(\cL_{1j},\ldots ,\cL_{nj}),\; j=1,\ldots ,n-1.
\end{displaymath}
By Remark ~\ref{rem-ordFi}, then $\cL_j\neq 0$. If $\cL_j\in\bbK$,
then we say that $\cL_{1j},\ldots ,\cL_{nj}$ are {\sf coprime} and
we write
\begin{displaymath}
(\cL_{1j},\ldots ,\cL_{nj})=1.
\end{displaymath}

\begin{thm}\label{thm-proper}
A necessary condition for the set $\cP (X,U)$ of DPPEs to be
proper is
\begin{displaymath}
(\cL_{1j},\ldots,\cL_{nj})=1,\; j=1,\ldots ,n-1.
\end{displaymath}
\end{thm}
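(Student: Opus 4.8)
The plan is to prove the contrapositive: if for some $j$ the differential operators $\cL_{1j},\ldots,\cL_{nj}$ have a nontrivial greatest common right divisor $\cL_j=\gcrd(\cL_{1j},\ldots,\cL_{nj})\notin\bbK$, then $\cP(X,U)$ is not proper. First I would fix such a $j$ and write $\cL_{ij}=\cL'_{ij}\cL_j$ for $i=1,\ldots,n$, where $\cL_j$ has positive order, say $\ord(\cL_j)=d\geq 1$. The key idea is that the variable $u_j$ enters every $F_i$ only through $\cL_{ij}(u_j)=\cL'_{ij}(\cL_j(u_j))$, so replacing $u_j$ by any other element $\tilde u_j$ of the universal extension $\bbE$ with $\cL_j(\tilde u_j)=\cL_j(u_j)$ produces exactly the same point $(P_1(U),\ldots,P_n(U))$ in $\im$. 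Thus, to exhibit non-properness it suffices to show that, for a generic zero $(\Gamma_1,\ldots,\Gamma_n)=(P_1(U),\ldots,P_n(U))$ of $\id$, there is more than one preimage.

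The core step is to produce a second preimage. Since $\bbE$ is a universal extension of $\bbK$ and $\cL_j$ is a nonzero linear differential operator of order $d\geq 1$, the homogeneous linear differential equation $\cL_j(w)=0$ has a nonzero solution $w\in\bbE$ (a universal extension contains solutions of every consistent differential-algebraic system, and in particular the full $d$-dimensional solution space of $\cL_j(w)=0$). Set $\tilde u_j=u_j+w$ and $\tilde u_\ell=u_\ell$ for $\ell\neq j$. Then $\cL_{ij}(\tilde u_j)=\cL'_{ij}(\cL_j(u_j+w))=\cL'_{ij}(\cL_j(u_j))=\cL_{ij}(u_j)$ for all $i$, hence $P_i(\tilde U)=P_i(U)=\Gamma_i$ for all $i=1,\ldots,n$. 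So $(\tau_1,\ldots,\tau_{n-1})=(u_1,\ldots,u_{n-1})$ and $(u_1,\ldots,u_{j-1},u_j+w,u_{j+1},\ldots,u_{n-1})$ are two distinct elements of $\bbE^{n-1}$ (distinct because $w\neq 0$) mapping to the same generic zero. By the definition of properness recalled in this section (and by \cite{Gao}, Lemma 3.1, which identifies $(P_1(U),\ldots,P_n(U))$ as a generic zero of $\id$), this shows $\cP(X,U)$ is not proper, which is the contrapositive of the claim.

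The one point that needs a little care is the assertion that a universal extension field $\bbE$ of $\bbK$ really contains a nonzero solution $w$ of $\cL_j(w)=0$. This follows from the defining property of a universal extension (every finite system of differential polynomial equations and inequations with coefficients in an intermediate field, having a solution in some extension, already has one in $\bbE$): the system $\{\cL_j(w)=0,\ w\neq 0\}$ is consistent over $\bbK$ — indeed $\cL_j$ being a nonzero operator of order $d\geq1$, its kernel in any Picard–Vessiot or sufficiently large extension has dimension $d\geq1$ over the constants — so a nonzero $w$ exists in $\bbE$. I would state this as a short lemma-free remark citing \cite{Kol} for the properties of universal extensions. I expect this existence step to be the only real obstacle; the algebraic manipulation $\cL_{ij}(u_j+w)=\cL_{ij}(u_j)$ is immediate from $\cL_{ij}=\cL'_{ij}\cL_j$ and $\bbK$-linearity of differential operators, and the rest is just unwinding the definition of properness.
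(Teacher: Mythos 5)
Your proposal is correct and follows essentially the same route as the paper: assume some $\gcrd(\cL_{1j},\ldots,\cL_{nj})$ is non-constant, pick a nonzero element of its kernel, and shift $u_j$ by it to obtain a second preimage of the generic zero $(P_1(U),\ldots,P_n(U))$, contradicting properness. Your version is in fact slightly more careful than the paper's, which asserts the kernel element $\eta$ lies in $\bbK$ (surely a slip for the universal extension $\bbE$), whereas you correctly locate it in $\bbE$ and justify its existence via the defining property of universal extensions.
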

\begin{proof}
Let us suppose that there exists $k\in \{1,\ldots ,n-1\}$ such
that $\cL_k=\gcrd(\cL_{1k},\ldots ,\cL_{nk})$ is non constant.
Then there exists a nonzero element $\eta\in\bbK$ such that
$\cL_{ik}(\eta)=0$, $i=1,\ldots ,n$. Define the element
\begin{displaymath}
U+\eta=(u_1,\ldots, u_k+\eta,\ldots ,u_{n-1})\in \bbE^{n-1}.
\end{displaymath}
Recall that $P_i(U)=a_i-\sum_{j=1}^{n-1} \cL_{ij} (u_j)$, then
$(P_1(U+\eta),\ldots ,P_n(U+\eta))=(P_1(U),\ldots ,P_n(U))$. Thus
by definition, $\cP(X,U)$ is not proper.
\end{proof}

\begin{rem}\label{rem-LijProper}
For $n=2$ the condition in Theorem ~\ref{thm-proper} is also sufficient but for $n\geq 3$ this is not true.
Let $\bbK=\bbC (t)$ and $\partial=\frac{\partial}{\partial t}$. Let us consider de system of linear DPPEs
\begin{equation*}
\left\{\begin{array}{ccl}
x_1 & = & 2u_1+u_{11}+u_2+u_{22}\\
x_2 & = & u_1+u_{11}+u_{12}+u_2-u_{22}\\
x_3 & = & u_1+2u_{11}+u_2+u_{21}
\end{array}\right.,
\end{equation*}
where
\begin{equation*}
\begin{array}{ccc}
\cL_{11}=2+\partial & \cL_{21}= 1+\partial +\partial^2 & \cL_{31} = 1+2\partial\\
\cL_{12} = 1+\partial & \cL_{22} = 1+\partial^2 & \cL_{32} = 1+\partial.
\end{array}
\end{equation*}
We compute a characteristic set $\cA$ of $[\ps]$ with Algorithm
~\ref{alg-characteristic set}
\begin{align*}
\cA=\{ &x_{12}-2x_{22}-2x_{21}-x_2+x_{33}+x_{32}+x_{31}+x_3,\\
& u_2+3/2 u_1+x_{11}-1/2 x_1-2x_{21}-x_2+x_{32}+1/2 x_{31}+1/2x_3,\\
& u_{11}-u_1-x_{11}+x_1+2x_{21}-x_{32}-x_3
\}
\end{align*}
Then by \cite{Gao}, Theorem 6.1, the system is not proper but
$(\cL_{1j},\cL_{2j},\cL_{3j})=1$, $j=1,2$.
\end{rem}

We define a new system of DPPEs having the same implicit ideal
than $\cP(X,U)$
\begin{equation}\label{eq-P'}
\cP'(X,U) =\left\{\begin{array}{ccc}x_1&=& P'_1 (U)=a_1-H'_1(U)\\\vdots \\
x_n&=& P'_n (U)=a_n-H'_i(U)\end{array}\right.,
\end{equation}
where $H'_i(U)=\sum_{j=1}^{n-1} \cL'_{ij} (u_j)$ and $F'_i(U)=
T_i(X)+H'_i(U)$.

\begin{prop}\label{prop-id}
Let $\id'$ be the implicit ideal of the set $\cP'(X,U)$ of DPPEs.
Then $\id=\id'$.
\end{prop}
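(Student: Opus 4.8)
The plan is to show that the implicit ideals $\id$ and $\id'$ coincide by exploiting the fact that the two systems $\cP(X,U)$ and $\cP'(X,U)$ have the \emph{same image} in $\bbE^n$, up to reparametrizing the differential parameters. Recall that $\cL_{ij} = \cL'_{ij}\cL_j$ with $\cL_j = \gcrd(\cL_{1j},\ldots,\cL_{nj})$, so $H_i(U) = \sum_{j=1}^{n-1}\cL'_{ij}(\cL_j(u_j))$. First I would observe that if we substitute $v_j = \cL_j(u_j)$ for $j=1,\ldots,n-1$, then $P_i(U) = a_i - \sum_j \cL'_{ij}(v_j) = P'_i(V)$. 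The key point is that, because $\bbE$ is a \emph{universal} extension field of $\bbK$, the differential operator $\cL_j \colon \bbE \to \bbE$ is surjective (any inhomogeneous linear ODE over $\bbK$ with nonzero leading coefficient has a solution in a universal field, and $\cL_j \neq 0$ by Remark \ref{rem-ordFi}). Hence for every $(\tau_1,\ldots,\tau_{n-1}) \in \bbE^{n-1}$ there is some $(\sigma_1,\ldots,\sigma_{n-1}) \in \bbE^{n-1}$ with $\cL_j(\sigma_j) = \tau_j$, and conversely every such tuple arises this way. Consequently $\im(\cP) = \im(\cP')$ as subsets of $\bbE^n$.

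Next I would connect the image to the implicit ideal. By \cite{Gao}, Lemma 3.1, $(P_1(U),\ldots,P_n(U))$ is a generic zero of $\id$ and $\zero(\id)$ is the smallest differential variety containing $\im(\cP)$; the analogous statement holds for $\cP'$ and $\id'$. Since $\im(\cP) = \im(\cP')$, their differential Zariski closures agree, i.e. $\zero(\id) = \zero(\id')$. Because both $\id$ and $\id'$ are differential prime ideals (again \cite{Gao}, Lemma 3.1) and $\bbK$ is suitably large, the differential Nullstellensatz gives $\id = \cI(\zero(\id)) = \cI(\zero(\id')) = \id'$.

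Alternatively, and perhaps more directly in the spirit of the preceding sections, I would argue at the level of the defining condition: $f \in \id$ iff $f(P_1(U),\ldots,P_n(U)) = 0$ in $\bbK\{U\}$, i.e. iff $f$ vanishes under the $\bbK$-differential-algebra homomorphism $\phi\colon \bbK\{X\} \to \bbK\{U\}$ sending $x_i \mapsto P_i(U)$. Writing $\psi\colon \bbK\{X\} \to \bbK\{V\}$ for $x_i \mapsto P'_i(V)$, I would check that the substitution $v_j \mapsto \cL_j(u_j)$ induces an injection $\bbK\{V\} \hookrightarrow \bbK\{U\}$ intertwining $\psi$ and $\phi$; injectivity here is exactly the statement that $u \mapsto \cL_j(u)$ is injective on a universal field, which holds since $\cL_j \neq 0$. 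Thus $\ker\phi = \ker\psi$, that is $\id = \id'$.

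The main obstacle is making precise the surjectivity/injectivity of the operator $\cL_j$ acting on the relevant differential field, and pinning down exactly which property of the universal field $\bbE$ (respectively of $\bbK\{U\}$) is being invoked — one must be careful that $\cL_j$ having a nonzero solution space and being surjective onto $\bbE$ really does follow from universality, and that the reparametrization $v_j = \cL_j(u_j)$ does not collapse information needed to detect membership in the implicit ideal. Once that is settled, everything else is a routine comparison of kernels of substitution homomorphisms.
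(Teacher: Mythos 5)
Your plan is correct in substance, but it takes a genuinely different route from the paper for the nontrivial inclusion $\id\subseteq\id'$. The paper exploits linearity head-on: it writes any $f\in\id=[\ps]\cap\bbK\{X\}$ as $f=\cF_1(F_1)+\cdots+\cF_n(F_n)$ with $\cF_i\in\bbK[\partial]$, deduces from the vanishing of the $U$-part that $(\cF_1\cL'_{1j}+\cdots+\cF_n\cL'_{nj})\cL_j=0$ for each $j$, and cancels $\cL_j\neq 0$ on the right using that $\bbK[\partial]$ has no zero divisors; this gives $\cF_1(H'_1)+\cdots+\cF_n(H'_n)=0$ and hence $f\in\id'$. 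You instead argue at the level of the parametrizations: either geometrically ($\im(\cP)=\im(\cP')$ because $\cL_j$ is surjective on the universal field $\bbE$, then pass to closures and use that both ideals are prime with generic zeros), or algebraically ($\id=\ker\phi$, $\id'=\ker\psi$, $\phi=\iota\circ\psi$ with $\iota\colon v_j\mapsto\cL_j(u_j)$ injective, so $\ker\phi=\ker\psi$). Both of your routes are viable; the second is arguably the cleanest of the three, since it avoids the universal-field machinery and also avoids the representation $f=\sum_i\cF_i(F_i)$, which the paper invokes without further justification. The easy inclusion $\id'\subseteq\id$ is handled identically in all versions, by the substitution $u_j\mapsto\cL_j(u_j)$.

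One point in your write-up needs repair. You assert that the injectivity of $\iota$ ``is exactly the statement that $u\mapsto\cL_j(u)$ is injective on a universal field, which holds since $\cL_j\neq 0$.'' That is the wrong statement, and it is false as written: a nonconstant operator such as $\cL_j=\partial$ is never injective on $\bbE$ (it annihilates the constants). What you actually need is that $\iota\colon\bbK\{V\}\to\bbK\{U\}$, $v_j\mapsto\cL_j(u_j)$, is an injective ring homomorphism, equivalently that $\cL_1(u_1),\ldots,\cL_{n-1}(u_{n-1})$ are differentially independent over $\bbK$. This is true and easy to prove: if $d_j=\deg\cL_j$, then $\partial^k\cL_j(u_j)$ has a nonzero coefficient in $u_{j\,d_j+k}$ and involves no higher derivative of any $u_{j'}$ with $j'\neq j$, so the family $\{\partial^k\cL_j(u_j)\}_{j,k}$ is triangular in the variables $u_{j\,d_j+k}$ and hence algebraically independent over $\bbK$. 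With that lemma in place (or with the surjectivity of $\cL_j$ on $\bbE$, which does follow correctly from universality in your first variant), your argument closes.
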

\begin{proof}
Given $f\in \id'$ then $f(P'_1(U),\ldots ,P'_n(U))=0$. In
particular, for $\eta=(\cL_1(u_1),\ldots ,\cL_{n-1}(u_{n-1}))$ then $0=f(P'_1(\eta ),\ldots ,P'_n(\eta ))=f(P_1(U),\ldots
,P_n(U))$. Therefore $\id'\subseteq\id$.

Now let us suppose that $f\in\id=[\ps]\cap\bbK\{X\}$. Then, there
exists $\cF_i\in\bbK [\partial]$, $i=1,\ldots ,n$ such that
$f(X)=\cF_1(F_1(X,U))+\cdots +\cF_n(F_n(X,U))$. Thus,
$\cF_1(H_1(U))+\cdots +\cF_n(H_n(U))=0$ and
$f(X)=\cF_1(T_1(X))+\cdots +\cF_n(T_n(X))$. As a consequence, for
each $j\in\{1,\ldots ,n-1\}$ we have $\cF_1(\cL_{1j}(u_j))+\cdots
+\cF_n(\cL_{nj}(u_j))=0$ . Thus $(\cF_1\cL'_{1j}+\cdots
+\cF_n\cL'_{nj})\cL_j=0$ and by Remark ~\ref{rem-ordFi} $\cL_j\neq
0$ so the differential operator $\cF_1\cL'_{1j}+\cdots
+\cF_n\cL'_{nj}=0$. We conclude that $\cF_1(H'_1(U))+\cdots
+\cF_n(H'_n(U))=0$ which implies $f(P'_1(U),\ldots ,P'_n(U))=0$
and proves $\id\subseteq\id'$.
\end{proof}

The following corollary follows directly from Theorem ~\ref{thm-gammaimp}
and Proposition ~\ref{prop-id}.

\begin{cor}\label{cor}
Given a system $\cP (X,U)$ of linear DPPEs with implicit ideal
$\id$. If $\dcres^h (H'_1,\ldots ,H'_n)\neq 0$ then $\id$ has
dimension $n-1$ and
\begin{displaymath}
\dcres (F'_1,\ldots ,F'_n)(X)=0
\end{displaymath}
is its implicit equation.
\end{cor}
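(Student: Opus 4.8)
The plan is to combine the two ingredients named in the statement, namely Theorem~\ref{thm-gammaimp} and Proposition~\ref{prop-id}, in essentially the most direct way. Proposition~\ref{prop-id} tells us that the auxiliary system $\cP'(X,U)$ defined in~\eqref{eq-P'} has exactly the same implicit ideal as $\cP(X,U)$, i.e.\ $\id=\id'$. So everything we want to say about the implicit equation of $\cP(X,U)$ can be read off from the implicit equation of $\cP'(X,U)$, and it is the latter that we attack with the machinery of Section~\ref{sec-Implicitization of linear DPPEs}.

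First I would observe that the polynomials $F'_i(X,U)=T_i(X)+H'_i(U)$ are again linear ordinary differential polynomials, with $H'_i$ the homogeneous part of $F'_i$, so they fall under the hypotheses of Theorem~\ref{thm-gammaimp} verbatim: that theorem applies to \emph{any} system of linear DPPEs, and $\cP'(X,U)$ is such a system. Hence, under the assumption $\dcres^h(H'_1,\ldots,H'_n)\neq 0$, Theorem~\ref{thm-gammaimp} yields directly that $\id'$ has dimension $n-1$ and that $\dcres(F'_1,\ldots,F'_n)(X)=0$ is the implicit equation of $\cP'(X,U)$. Then I would invoke Proposition~\ref{prop-id} to replace $\id'$ by $\id$ throughout: $\id=\id'$ has dimension $n-1$, and since the notion of implicit equation depends only on the implicit ideal (a characteristic polynomial of $\id'$ is a characteristic polynomial of $\id$), the equation $\dcres(F'_1,\ldots,F'_n)(X)=0$ is also the implicit equation of $\cP(X,U)$. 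That is exactly the assertion of the corollary.

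There is really no serious obstacle here; the corollary is a genuine formal consequence of the two cited results, which is why the paper introduces it as following "directly." The only point that requires a moment's care is the bookkeeping check that $\cP'(X,U)$ legitimately satisfies the standing hypotheses of Section~\ref{sec-Basic notions and notation}---in particular Remark~\ref{rem-ordFi}, the condition that for each $j$ some $\cL'_{ij}\neq 0$. But this is immediate: $\cL_{ij}=\cL'_{ij}\cL_j$ with $\cL_j\neq 0$ (again by Remark~\ref{rem-ordFi} applied to the original system), so $\cL'_{ij}=0$ for all $i$ would force $\cL_{ij}=0$ for all $i$, contradicting the standing assumption on $\cP(X,U)$. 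With that checked, the proof is a two-line citation chain, and I would write it as such rather than reproving anything from Theorem~\ref{thm-gammaimp} or Proposition~\ref{prop-id}.
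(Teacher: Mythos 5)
Your proposal is correct and follows exactly the route the paper intends: the paper's entire justification is the remark that the corollary ``follows directly from Theorem~\ref{thm-gammaimp} and Proposition~\ref{prop-id},'' which is precisely your citation chain (apply Theorem~\ref{thm-gammaimp} to $\cP'(X,U)$, then transfer via $\id=\id'$). Your extra check that $\cP'(X,U)$ satisfies the standing hypothesis of Remark~\ref{rem-ordFi} is a sensible bit of diligence the paper leaves implicit.
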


\begin{ex}
The method to compute the implicit equation of the system $\cP (X,U)$  given by the equations
$x_1= u_1+u_{11}+u_2+u_{21}$; $x_2= t(u_{11}+u_{12})+u_{22}$; $x_3= u_1+u_{11}+u_{21}$ would be the following.
First, compute the system $\cP' (X,U)$ as in ~\eqref{eq-P'}, which is the system in Example ~\ref{ex-intro}. Then
the implicit equation of $\cP (X,U)$ is
$\dcres (F'_1,F'_2,F'_3)(X)=(t-1)x_{12}-tx_{31}-(t-1)x_{32}+x_2=0$.
\end{ex}

\section{Treatment of special cases}\label{sec-Treatment of special cases}

We give an explicit expression of the implicit equation of $\cP
(X,U)$ in terms of the differential operators defining the DPPEs
for $n=2$ and with some restrictions for $n=3$.

\subsection{Case n=2}\label{n2}

Given differential operators $\cL_1,\cL_2\in \bbK [\partial]$ we
consider the set of DPPEs
\begin{equation}
\cP_2(x_1,x_2,u) =\left\{\begin{array}{ccc}x_1 &= & a_1-\cL_1 (u)\\
x_2&= & a_2-\cL_2 (u)\end{array}\right.
\end{equation}
Let $H_1(u)=\cL_1(u)$, $H_2(u)=\cL_2(u)$ and
$F_1(x_1,x_2,u)=x_1-a_1+H_1(u)$, $F_2(x_1,x_2,u)=x_2-a_2+H_2(u)$.

\para

\begin{rem}\label{rem-chardin}
By \cite{Ch}, Theorem 2, it holds $\dres^h(H_1,H_2)\neq 0$ if and only if $(\cL_1,
\cL_2)=1$.
\end{rem}

If $\bbK$ is a field of constants with respect to $\partial$, that
is $\partial (k)=0$ for all $k\in \bbK$ ( for example
$\partial=\frac{\partial}{\partial t}$ and $\bbK=\bbC$) then
$\cL_1\cL_2-\cL_2\cL_1=0$, i.e. the differential operators
commute. If $\bbK$ is not a field of constants with respect to
$\partial$, then $\bbK [\partial]$ is not commutative. We need the
next commutativity result to prove the results in this section.
Most likely it was studied by Tsirulik in \cite{Tsi} but we have
not been able to find this paper. We give an algorithmic proof.

\begin{lem}\label{commutators}
Let $\cL_1,\cL_2\in\bbK[\partial]$ such that $(\cL_1, \cL_2)=1$.
Then there exist $\cD_1,\cD_2\in\bbK[\partial]$ with
$deg(\cD_i)\leq deg(\cL_i)-1$, $i=1,2$ such that
\begin{displaymath}
(\cL_2-\cD_2)\cL_1-(\cL_1-\cD_1)\cL_2=0.
\end{displaymath}
\end{lem}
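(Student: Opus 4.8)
The statement is a commutativity-type identity: given coprime $\cL_1,\cL_2\in\bbK[\partial]$, we must produce low-degree correction terms $\cD_1,\cD_2$ so that $(\cL_2-\cD_2)\cL_1=(\cL_1-\cD_1)\cL_2$. The natural strategy is to look for a common (left) multiple of $\cL_1$ and $\cL_2$ of the smallest possible order and read off the cofactors. Set $d_i=\deg(\cL_i)$ and $d=d_1+d_2$. I would consider the $\bbK$-vector space spanned by $\{\partial^a\cL_1\mid 0\le a\le d_2\}\cup\{\partial^b\cL_2\mid 0\le b\le d_1\}$; this is a family of $d_1+d_2+2=d+2$ operators all lying in the space of operators of degree $\le d$, which has dimension $d+1$. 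Hence there is a nontrivial $\bbK$-linear relation
\begin{displaymath}
A\,\cL_1 = B\,\cL_2, \qquad \deg(A)\le d_2,\ \deg(B)\le d_1,
\end{displaymath}
with $(A,B)\neq(0,0)$. Because $(\cL_1,\cL_2)=1$ and $\bbK[\partial]$ is a (right and left) euclidean domain, an argument with the least left common multiple shows $A\neq 0$, $B\neq 0$, and in fact $\deg(A)=d_2$, $\deg(B)=d_1$ exactly (otherwise $A\cL_1=B\cL_2$ would be a common left multiple of order $<d$, forcing, via the coprimeness, $A=B=0$). Comparing the leading coefficients of $A\cL_1$ and $B\cL_2$ — both equal to (leading coeff of $A$)$\cdot$(leading coeff of $\cL_1$) and (leading coeff of $B$)$\cdot$(leading coeff of $\cL_2$) respectively at order $d$ — lets me rescale so that the leading coefficient of $A$ equals that of $\cL_2$ and the leading coefficient of $B$ equals that of $\cL_1$. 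Then $\cD_2:=\cL_2-A$ and $\cD_1:=\cL_1-B$ have degree $\le d_2-1$ and $\le d_1-1$ respectively, and the relation $A\cL_1=B\cL_2$ becomes exactly $(\cL_2-\cD_2)\cL_1-(\cL_1-\cD_1)\cL_2=0$, as required.

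Since the paper says "we give an algorithmic proof," I would present the above in algorithmic form: the existence of the relation $A\cL_1=B\cL_2$ is produced by solving a homogeneous linear system over $\bbK$ (the $d+2$ operators expressed in the basis $1,\partial,\dots,\partial^d$ give a $(d+1)\times(d+2)$ matrix whose kernel is nonzero), and then the normalization of leading coefficients is an explicit scaling. The coprimeness hypothesis enters only to guarantee that the solution has both $A$ and $B$ of full degree, so that subtracting from $\cL_2$ and $\cL_1$ genuinely drops the degree. One can phrase this last point as: if $\deg(B)<d_1$, then $B\cL_2$ has order $<d$; writing $\cL_j=\cL'_{j}\cdot\gcrd$... — but here $\gcrd(\cL_1,\cL_2)\in\bbK$, so any common left multiple $C$ of $\cL_1$ and $\cL_2$ has $\deg(C)\ge d$ unless $C=0$ (the least common left multiple has degree exactly $d$ by the standard euclidean-domain dimension count, mirroring Remark~\ref{rem-chardin}); hence $B\cL_2$ of order $<d$ forces $B\cL_2=0$, so $B=0$ and then $A\cL_1=0$ so $A=0$, a contradiction.

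The main obstacle, and the only genuinely non-formal point, is the claim that one can simultaneously normalize the two leading coefficients — i.e. that the relation can be scaled so that $A$ and $\cL_2$ agree in top degree \emph{and} $B$ and $\cL_1$ agree in top degree. This works precisely because at order $d$ the coefficient of $A\cL_1$ is $\mathrm{lc}(A)\,\mathrm{lc}(\cL_1)$ and that of $B\cL_2$ is $\mathrm{lc}(B)\,\mathrm{lc}(\cL_2)$ (the derivation $\partial$ contributes nothing new at the very top), and these are equal since $A\cL_1=B\cL_2$; so a single scalar $\lambda=\mathrm{lc}(\cL_2)/\mathrm{lc}(A)=\mathrm{lc}(\cL_1)/\mathrm{lc}(B)$ does both jobs at once. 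I would check this leading-coefficient bookkeeping carefully, since the non-commutativity of $\bbK[\partial]$ makes it tempting to worry about cross terms $\partial(k)$, but those all have strictly lower order and do not affect the degree-$d$ comparison. Everything else — existence of the kernel vector, the euclidean-domain facts about least common left multiples, the degree drop of $\cD_i$ — is routine and can be cited from the already-recalled properties of $\bbK[\partial]$.
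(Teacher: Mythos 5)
Your proof is correct, but it is not the route the paper takes. The paper's argument is: compute the commutator $\cL_1\cL_2-\cL_2\cL_1$ explicitly (via the coefficient formulas of Berkovich--Tsirulik) and observe it has degree at most $o_1+o_2-1$; then seek $\cD_1,\cD_2$ with $\deg(\cD_i)\leq o_i-1$ satisfying the \emph{inhomogeneous} square linear system $\cD_1\cL_2-\cD_2\cL_1=\cL_1\cL_2-\cL_2\cL_1$ in the $o_1+o_2$ unknown coefficients; the coefficient matrix of that system is precisely the differential homogeneous resultant matrix $M(L^h)$ of $H_1,H_2$, which is nonsingular because $(\cL_1,\cL_2)=1$ is equivalent to $\dres^h(H_1,H_2)\neq 0$ (Remark~\ref{rem-chardin}, i.e.\ Chardin's theorem). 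This yields existence \emph{and uniqueness} of $(\cD_1,\cD_2)$ and is literally the algorithm invoked later in Section~\ref{n2}. You instead produce a syzygy $A\cL_1=B\cL_2$ by a pigeonhole count on a $(d+1)\times(d+2)$ coefficient matrix, pin down $\deg A=o_2$, $\deg B=o_1$ via the degree formula $\deg(\mathrm{lclm})+\deg(\gcrd)=o_1+o_2$ for Ore polynomials, and then normalize leading coefficients (your check that a single scalar $\lambda$ normalizes both is right, since the top-degree coefficient of a product of operators is the product of the leading coefficients). Your approach buys independence from the explicit commutator computation and from Chardin's theorem, but it imports the least-common-left-multiple degree formula, a classical Ore-theoretic fact that the paper never states and that you should cite or prove if you use it; the paper's approach stays entirely inside the resultant machinery it has already built and gets uniqueness of $\cD_1,\cD_2$ for free. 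Both arguments are algorithmic and both are valid proofs of the lemma.
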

\begin{proof}
If $\cL_1$ and $\cL_2$ commute then $\cD_i=0$, $i=1,2$. Let us
suppose that $\cL_1\cL_2-\cL_2\cL_1\neq 0$. Let
$\cL_1=\sum_{i=0}^{o_1}\phi_i\partial^i$ and
$\cL_2=\sum_{j=0}^{o_2}\psi_j\partial^j$ with $\phi_i,\psi_j\in\bbK$. 
By equation (1.2) in \cite{BT}, we have
\begin{displaymath}
\cL_1\cL_2-\cL_2\cL_1=\sum_{k=0}^{o_1+o_2} (c_k-\bar{c}_k)
\partial^k\mbox{ where }
\end{displaymath}
\begin{displaymath}
c_k=\sum_{s=max(0,k-o_2)}^{min(o_1,k)} \sum_{i=s}^{o_1} \phi_i
\partial^{(i-s)} (\psi_{k-s}),\indent
\bar{c}_k=\sum_{s=max(0,k-o_1)}^{min(o_2,k)} \sum_{j=s}^{o_2} \psi_j
\partial^{(j-s)} (\phi_{k-s}).
\end{displaymath}
Then $c_{o_1+o_2}=\bar{c}_{o_1+o_2}=\phi_{o_1}\psi_{o_2}$. Therefore the
degree of the operator $\cL_1\cL_2-\cL_2\cL_1$ is less than or
equal to $o_1+o_2-1$.

We want to find $\cD_1=\sum_{i=0}^{o_1-1}\alpha_i\partial^i$ and
$\cD_2=\sum_{j=0}^{o_2-1}\beta_j\partial^j$ with
$\alpha_i,\beta_j\in \bbK$ such that
$\cD_1\cL_2-\cD_2\cL_1=\cL_1\cL_2-\cL_2\cL_1$. We have
\begin{displaymath}
\cD_1\cL_2-\cD_2\cL_1=\sum_{k=0}^{o_1+o_2-1}
(\gamma_k-\bar{\gamma}_k)
\partial^k\mbox{ where }
\end{displaymath}
\begin{displaymath}
\gamma_k=\sum_{s=max(0,k-o_2)}^{min(o_1-1,k)} \sum_{i=s}^{o_1-1}
\alpha_i \partial^{(i-s)} (\psi_{k-s}),\indent
\bar{\gamma}_k=\sum_{s=max(0,k-o_1)}^{min(o_2-1,k)}
\sum_{j=s}^{o_2-1} \beta_j \partial^{(j-s)} (\phi_{k-s}).
\end{displaymath}

Let us consider the system of $o_1+o_2$ equations
\begin{equation}\label{system}
\gamma_k-\bar{\gamma}_k=c_k-\bar{c}_k, \indent k=0,\ldots
,o_1+o_2-1
\end{equation}
in the $N=o_1+o_2$ unknowns $\alpha_i$, $i=0,\ldots ,o_1$ and
$\beta_j$, $j=0,\ldots ,o_2$. The coefficient matrix of the system
~\eqref{system} is the differential homogeneous resultant matrix
$M(L^h)$ of $H_1,H_2$, with $L^h=N$. Thus the system has a unique
solution since $\det(M(L^h))=\dres^h(H_1,H_2)\neq 0$, by Remmark
~\ref{rem-chardin}.
\end{proof}

\begin{thm}
Given a system $\cP_2 (x_1,x_2,u)$ with implicit ideal $\id$.
\begin{enumerate}
\item The following statements are equivalent.
\begin{enumerate}
\item $\cP_2 (x_1,x_2,u)$ is proper.

\item $(\cL_1,\cL_2)=1$.

\item $\dres^h (H_1,H_2)\neq 0$.
\end{enumerate}

\item If $(\cL_1,\cL_2)=1$ then the dimension of $\id$ is $1$ and
$\dres (F_1,F_2)(x_1,x_2)=0$ is the implicit equation of $\cP_2
(x_1,x_2,u)$. Furthermore, there exist
$\cD_1,\cD_2\in\bbK[\partial]$ such that
\begin{align*}
&\dres (F_1,F_2)(x_1,x_2)=\\
&(-1)^a\dres^h(H_1,H_2)[(\cL_2-\cD_2)(x_1-a_1)-(\cL_1-\cD_1)(x_2-a_2)],
\end{align*}
for some $a\in\bbN$.
\end{enumerate}
\end{thm}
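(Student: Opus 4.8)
The plan is to reduce everything to the machinery already built in Sections~\ref{sec-Implicitization of linear DPPEs I}--\ref{sec-Some results on properness}, specialized to $n=2$, together with the commutativity Lemma~\ref{commutators}. First note that for $n=2$ there is a single parameter $u$, so $\gamma(F_1,F_2)=\gamma_1(F_1,F_2)=\min\{o_i-\ord(F_i,u)\}$; if one replaces $F_i$ by $F'_i$ as in~\eqref{eq-P'} then $\gcrd(\cL_1,\cL_2)$ is extracted and the reduced operators have no common right factor, which is exactly the hypothesis of Lemma~\ref{commutators}. The first step is therefore to establish statement~1: the equivalence (b)$\Leftrightarrow$(c) is precisely Remark~\ref{rem-chardin} (Chardin), and (a)$\Leftrightarrow$(c) follows from Proposition~\ref{prop-proper} (giving (c)$\Rightarrow$(a)) and Theorem~\ref{thm-proper} (giving (a)$\Rightarrow$(b), hence (a)$\Rightarrow$(c) via Remark~\ref{rem-chardin}). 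So statement~1 is essentially a bookkeeping assembly of results already in hand.

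For statement~2, assume $(\cL_1,\cL_2)=1$, so by Remark~\ref{rem-chardin} $\dres^h(H_1,H_2)\neq 0$ and $\gamma=0$, whence $\dcres=\dres$ and Theorem~\ref{thm-gammaimp} (equivalently Theorem~\ref{implicit}) already tells us $\id$ has dimension $1$ and $\dres(F_1,F_2)(x_1,x_2)=0$ is the implicit equation. The remaining content is the explicit formula. Here I would invoke Theorem~\ref{dresident} with $n=2$: there is $P\in\id$ and $k\in\{1,2\}$ with $\ord(P,x_k)=N-o_k-\gamma=N-o_k$ and $\ord(P,x_i)\leq N-o_i$, and
\begin{displaymath}
\dres(F_1,F_2)=\frac{1}{\alpha}\det(S_{\gamma k})\,\dres^h(H_1,H_2)\,P(X),
\end{displaymath}
with $\alpha=(-1)^a\,\partial P/\partial x_{k\,N-o_k}$. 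Since $P$ is linear in the $x_{ik}$ and $\det(S_{\gamma k})/\alpha$ is a nonzero scalar in $\bbK$, this already shows $\dres(F_1,F_2)$ equals a $\bbK$-multiple of $\dres^h(H_1,H_2)$ times an explicit linear differential polynomial in $x_1-a_1,x_2-a_2$. The task is to identify that polynomial as $(\cL_2-\cD_2)(x_1-a_1)-(\cL_1-\cD_1)(x_2-a_2)$ for the $\cD_i$ produced by Lemma~\ref{commutators}.

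To do that, observe that Lemma~\ref{commutators} gives $\cD_1,\cD_2$ with $\deg(\cD_i)\le o_i-1$ and $(\cL_2-\cD_2)\cL_1=(\cL_1-\cD_1)\cL_2$, i.e. $(\cL_2-\cD_2)(H_1)-(\cL_1-\cD_1)(H_2)=0$ identically in $u$. Setting $Q(X)=(\cL_2-\cD_2)(x_1-a_1)-(\cL_1-\cD_1)(x_2-a_2)$ and using $F_i=T_i+H_i$, we get $(\cL_2-\cD_2)(F_1)-(\cL_1-\cD_1)(F_2)=Q(X)$, which exhibits $Q\in[\ps]\cap\bbK\{X\}=\id$; moreover the degree bounds $\deg(\cL_i-\cD_i)\le o_i$ (with the top term of $\cL_i-\cD_i$ being that of $\cL_i$) force $\ord(Q,x_1)=o_2=N-o_1$ and $\ord(Q,x_2)\le o_1=N-o_2$, so $Q$ is an eligible choice of $P$ with $k=1$. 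Feeding $P=Q$ into the displayed identity of Theorem~\ref{dresident} then yields the claimed formula, with $(-1)^a$ absorbing $\det(S_{\gamma k})/\alpha$ up to sign (one checks $S_{\gamma 1}$ here is the $1\times1$ matrix whose entry is the leading coefficient of $\cL_2$, and $\partial Q/\partial x_{1\,o_2}$ is that same leading coefficient, so the scalar is exactly $\pm1$). I expect the main obstacle to be precisely this last verification of orders and leading coefficients: one must be careful that $\cD_i$ does not accidentally cancel the leading term of $\cL_i$ (it cannot, since $\deg\cD_i\le o_i-1<o_i$), and that the normalization constant coming out of Theorem~\ref{dresident} really collapses to a sign rather than a genuine element of $\bbK$; this is a short but slightly delicate coefficient bookkeeping argument rather than anything conceptually hard.
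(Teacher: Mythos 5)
Your proposal follows essentially the same route as the paper: statement~1 is assembled from Proposition~\ref{prop-proper}, Theorem~\ref{thm-proper} and Remark~\ref{rem-chardin}, and statement~2 combines Theorem~\ref{thm-gammaimp} with Lemma~\ref{commutators} to produce the explicit polynomial $P(X)=(\cL_2-\cD_2)(x_1-a_1)-(\cL_1-\cD_1)(x_2-a_2)$ and then applies Theorem~\ref{dresident}. Your additional bookkeeping (verifying the order conditions on $P$ and that $\det(S_{\gamma k})/\alpha$ collapses to a sign because $\deg\cD_i\le o_i-1$) correctly fills in details the paper leaves implicit.
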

\begin{proof}
\begin{enumerate}
\item It follows from Proposition ~\ref{prop-proper}, Theorem ~\ref{thm-proper} and Remark ~\ref{rem-chardin}.

\item Observe that for $n=2$, $\gamma(F_1,F_2)=\gamma(H_1,H_2)=0$.
By statement 1 and Theorem ~\ref{thm-gammaimp}, $\dres
(F_1,F_2)(x_1,x_2)=0$ is the implicit equation of $\cP_2
(x_1,x_2,u)$. By Lemma ~\ref{commutators} there exist $\cD_1,\cD_2\in \bbK
[\partial]$ such that
$(\cL_2-\cD_2)\cL_1(u)-(\cL_1-\cD_1)\cL_2(u)=0$. Let
\begin{displaymath}
P(x_1,x_2)=(\cL_2-\cD_2)(x_1-a_1)-(\cL_1-\cD_1)(x_2-a_2).
\end{displaymath}
Since $\dres^h (H_1,H_2)\neq 0$ then there exists $k\in\{1,2\}$
such that $\det (S_k)=\partial P/ \partial x_{k N-o_k}\neq 0$.
The result follows from Theorem ~\ref{dresident}.
\end{enumerate}
\end{proof}

\begin{alg}
{\sf Given} the system $\cP_2 (x_1,x_2,u)$ of DPPEs this algorithm
{\sf returns} its implicit equation $A(X)$.
\begin{enumerate}
\item $\cL:=\gcrd (\cL_1 ,\cL_2)$.

\item If $\cL\notin\bbK$ then compute $\cL'_i$ such that
$\cL_i=\cL'_i\cL$ and set $\cL_i:=\cL'_i$, $i=1,2$.

\item Compute $\cD_1$ and $\cD_2$.

\item $A(X):=(\cL_2-\cD_2)(x_1-a_1)-(\cL_1-\cD_1)(x_2-a_2)$.
\end{enumerate}
\end{alg}

The Maple packages OreTools, DEtools and Ore-algebra
can be used to compute $\gcrd(\cL_1,\cL_2)$, \cite{ALL}.
Also, we can compute $\cD_1$ and $\cD_2$ following the
algorithm in the proof of Lemma ~\ref{commutators}.

\subsection{Case n=3}
We restrict to the case $\bbK=\bbC$ and consider the system of
DPPEs
\begin{equation}
\cP_3(X,U) =\left\{\begin{array}{ccc}
x_1 &= & a_1-\cL_{11}(u_1)-\cL_{12}(u_2)\\
x_2 &= & a_2-\cL_{21}(u_1)-\cL_{22}(u_2)\\
x_3 &= & a_3-\cL_{31}(u_1)-\cL_{32}(u_2)
\end{array}\right.
\end{equation}
where $X=\{x_1,x_2,x_3\}$, $U=\{u_1,u_2\}$ and $\cL_{ij}\in\bbC
[\partial ]$. These differential operators commute, therefore the
polynomial
\begin{align*}
P(X)=&\cL_{21}\cL_{32}(x_1-a_1)-\cL_{22}\cL_{31}(x_1-a_1)-\cL_{11}\cL_{32}(x_2-a_2)+\\
&+\cL_{12}\cL_{31}(x_2-a_2)+\cL_{11}\cL_{22}(x_3-a_3)-\cL_{12}\cL_{21}(x_3-a_3)
\end{align*}
belongs to the implicit ideal of $\cP_3(X,U)$.

For $i=1,2,3$, we have $F_i(X,U)=a_i+H_i(U)$ and
$H_i(U)=\cL_{i1}(u_1)+\cL_{i2}(u_2)$ of order $o_i$. Let
$\gamma=\gamma (F_1,F_2,F_3)$ and let $S_{\gamma i}$ be the matrix
defined in Section ~\ref{sec-Linear gamma-Differential Resultants}. It is easily obtained that
\begin{displaymath}
\det (S_{\gamma i})=\frac{\partial P}{\partial x_{i
N-o_i-\gamma}}.
\end{displaymath}
If $\det (S_{\gamma i})\neq 0$ for some $i\in\{1,2,3\}$ then it follows
from Theorem ~\ref{dresident} that
\begin{displaymath}
\dcres (F_1,F_2,F_3)(X)=(-1)^a\dcres^h
(H_1,H_2,H_3)P(X),
\end{displaymath}
$a\in\bbN$.

\end{document}